\documentclass[11pt]{article}
\usepackage{t1enc}
\usepackage{amsmath,amssymb,color}
\usepackage{graphicx}
\usepackage{enumitem,hyperref}
\usepackage{comment}
\newtheorem{Theorem}{Theorem}

\newtheorem{Proposition}{Proposition}
\newtheorem{Assumption}{Assumption}
\newtheorem{Lemma}{Lemma}
\newtheorem{Corollary}{Corollary}
\newtheorem{Remark}{Remark}
\newtheorem{Example}{Example}

\newenvironment{proof}[1][Proof]{\textbf{#1.} }{\hfill $\square$}

\newcommand{\blue}[1]{{\color{blue}#1}}
\newcommand{\red}[1]{{\color{red}#1}}

\def \eps{\varepsilon}

\def \bE{\mathbb{E}}

\def \cF{\mathcal{F}}

\def \bP{\mathbb{P}}
\def \1{\mathbf{1}}

\newcommand{\etamax}{\eta^\star}
\newcommand{\etamin}{\eta_\star}
\newcommand{\lambdamax}{\lambda^\star}

\newcommand*{\wt}{\widetilde}
\newcommand*{\ol}{\overline}

\begin{document}

\title{Time discretization of BSDEs with singular terminal condition using asymptotic expansion.}
\author{Julia Ackermann$^1$, Thomas Kruse$^2$, Alexandre Popier$^3$\\
\vspace{0.1cm} 
\\
\small{$^1$ Department of Mathematics \& Informatics, University of Wuppertal,}
    \vspace{-0.1cm}\\
    \small{Germany; e-mail: \texttt{jackermann}\textcircled{\texttt{a}}\texttt{uni-wuppertal.de}} \smallskip\\
    \small{$^2$ Department of Mathematics \& Informatics, University of Wuppertal,}
    \vspace{-0.1cm}\\
    \small{Germany; e-mail: \texttt{tkruse}\textcircled{\texttt{a}}\texttt{uni-wuppertal.de}} \smallskip\\
\small{$^3$ Laboratoire Manceau de Mathématiques, Le Mans Université,}
    \vspace{-0.1cm}\\
    \small{France; e-mail: \texttt{alexandre.popier}\textcircled{\texttt{a}}\texttt{univ-lemans.fr}} \smallskip\\
    }
\date{\today}

\maketitle

\begin{abstract}
    We consider a class of backward stochastic differential equations (BSDEs) with singular terminal condition and develop a numerical scheme to approximate their solution. 
    To this end, we extend an asymptotic development of the BSDE solution known from the power case, which arises from optimal liquidation problems, to more general generators. 
    This expansion allows to obtain a suitable approximation of the BSDE solution close to the terminal time. Using this as a terminal condition, we analyze the error of a backward Euler implicit scheme and detail its dependence on the terminal condition.
\end{abstract}

\tableofcontents

\section{Introduction}

In this paper, we consider backward stochastic differential equations (BSDEs in short) of the following form
\begin{equation} \label{eq:sing_BSDE}
- dY_t = \left( \frac{1}{\eta_t} f(Y_t) + \lambda_t \right) dt - Z_t dW_t, \quad t \in [0,T),
\end{equation}
with the singular terminal condition $\xi = +\infty$ a.s.\ and where $W$ is a $d$-dimensional Brownian motion 
on a filtered probability space $(\Omega,\cF_T,(\cF_t)_{t\in[0,T]},\bP)$ (which satisfies the usual conditions and that $(\cF_t)_{t\in[0,T]}$ is the filtration generated by $W$). 

This kind of BSDEs appears as the adjoint equation in optimal liquidation problems (see \cite{guea:16} for an overview) in finance. Indeed, in the paper \cite{anki:jean:krus:13}, the authors use the notion of a solution of a singular BSDE to provide a purely probabilistic solution of the stochastic control problem
to minimize 
\begin{equation}\label{eq:liquidationproblemcosts}
    \mathbb E\bigg[ \int_0^T \big( \zeta_t \lvert \dot{X}_t \rvert^p + \lambda_t \lvert X_t \rvert^p \big) \, dt \bigg]
\end{equation}
over all progressively measurable processes $X\colon \Omega \times [0,T] \to \mathbb R$ that satisfy $X_0=x \in \mathbb R$ and $X_T=0$ a.s.\ and have absolutely continuous paths. 
They show that 
the value function is given by $ |x|^p Y_0,$ where $Y$ is the solution of
\begin{equation} \label{eq:poly_bsde}
-dY_t  =-  \frac{p-1}{\zeta_t^{q-1}} |Y_t|^{q-1}Y_t dt + \lambda_t dt - Z_t dW_t,\quad t\in [0,T).
\end{equation}
Here $q>1$, and $p$ is its H\"older conjugate. 
Moreover, the optimal state process for the stochastic control problem also depends on $Y$ and is of the form 
$x \exp(-\int_0^t \zeta_u^{1-q} |Y_u|^{q-1} du)$
(see \cite{anki:jean:krus:13} for the details). 

\medskip

To obtain a solution for the BSDE \eqref{eq:sing_BSDE}, there are essentially two different approaches in the literature. 
\begin{itemize}
\item The truncation procedure developed in \cite{popi:06,anki:jean:krus:13,krus:popi:15}: the solution is obtained as the (increasing) limit of a sequence of solutions with bounded coefficients. 
\item An asymptotic development as in \cite{grae:hors:qiu:15,grae:hors:sere:18,grae:popi:21}: the expansion of the solution is written in terms of some known coefficients and the solution of a BSDE with a singular generator, but with a zero terminal condition.
\end{itemize}
The first procedure is easier to understand and the assumptions on the data $(f,\eta,\lambda)$ are much weaker in \cite{krus:popi:15} than in \cite{grae:popi:21}. Moreover, any terminal condition $\xi$ can be considered and not only $\xi = +\infty$. However, the rate of convergence for the approximating sequence is not known and thus cannot be used for numerics. The second approach requires stronger conditions on the parameters and $\xi = +\infty$, but gives a better understanding of the behavior of the solution close to the final time $T$.

Let us also remark that in the Markovian setting, BSDE \eqref{eq:sing_BSDE} is related to the reaction-diffusion partial differential equation (PDE in short)
\begin{equation} \label{eq:related_PDE}
\dfrac{\partial u}{\partial t} + \mathcal L u + \frac{1}{\eta(t,x)} f(u) +  \lambda(t,x) = 0,
\end{equation}
where $\mathcal L$ is the associated infinitesimal generator (here a second order differential operator) (see  \cite[Chapter 5.4]{pard:rasc:14} or \cite[Chapters 5 and 8]{zhan:17} for the link between BSDEs and PDEs). This relation is used in \cite{grae:hors:qiu:15,grae:hors:sere:18}, and such PDEs with the terminal condition $u(T,\cdot) = +\infty$ have been widely studied when $f(y) =- y|y|^{q-1}$ with $q > 1$ (see among others \cite{dynk:kuzn:98,marc:vero:99,popi:17}).  

\medskip

Hence, the theory for the BSDE \eqref{eq:sing_BSDE} is well established. But so far there is no work concerning the numerical computation of the solution. For practical applications (liquidation in finance), this lack is a main drawback. In \cite[Section 5]{anki:jean:krus:13} some particular examples are developed for which an explicit solution $Y$ can be computed for \eqref{eq:poly_bsde}; nevertheless, in general, an explicit expression is not available neither for the BSDE \eqref{eq:poly_bsde} nor for the BSDE \eqref{eq:sing_BSDE}. To the best of our knowledge, there is no work on numerically solving the previous PDE~\eqref{eq:related_PDE} by analytical methods. 

\medskip

The main goal of this paper is to give a numerical scheme for solving BSDEs of the type \eqref{eq:sing_BSDE}. There is a huge amount of literature about algorithms to solve BSDEs when the terminal value is integrable, which can be roughly classified as follows:
\begin{itemize}
\item Backward Euler schemes (among many others \cite{gobe:lemo:wari:05,lion:dosr:szpr:15,zhan:04} and \cite[Chapter 5]{zhan:17}). It requires that the terminal value $\xi$ is integrable, hence it cannot be used when $\xi = +\infty$. 
\item Picard iterations (\cite{hutz:jent:krus:23,hutz:krus:22}). Note that the solution of the BSDE with singular terminal condition is not obtained by a fixed-point argument. As previously mentioned, the truncation approximation is based on monotone convergence without any rate of convergence. 
\item Forward schemes via branching processes  (\cite{henr:tan:touz:19,henr:tan:touz:14}). These are adapted for power-type generators; but with bounded terminal condition for the BSDE. 
\end{itemize}
None of them can be directly used in our framework. We proceed in two steps. 
\begin{enumerate}
\item We find a good approximation of the solution of the BSDE on a neighborhood of $T$, that is we find $\Delta > 0$, $\alpha > 0$ and an explicit, adapted, bounded stochastic process $\xi$ such that the $L^\infty$-norm of $Y- \xi$ is controlled by $t \mapsto C(T-t)^\alpha$ on $[T-\Delta,T]$. See Assumption \ref{ass:approximation} below. 
\item We can use the backward Euler implicit scheme starting at the time $T-\Delta$ from the position $ \xi_{T-\Delta}$. 
\end{enumerate}
The key idea for the first step is to extend the asymptotic development of the solution as proposed in  \cite{grae:hors:qiu:15,grae:hors:sere:18,grae:popi:21} to more general generators $f$; up to now, the asymptotic development was known only for the power case coming from the optimal liquidation problem. 

Then we derive a global error of our approximation technique of $Y$. The discretization error of the backward Euler scheme has been extensively studied, including in settings more general than ours (see, e.g., \cite{gobe:lemo:wari:05,gobe:lemo:wari:06,lion:dosr:szpr:15}). Nevertheless, in these studies, many constants appear that actually depend on the terminal condition, either through its value or its regularity (e.g., its Lipschitz constant). When the terminal condition is fixed, this dependency is of little importance, since the error typically takes the form 
$C h^\beta$, where 
$h$ is the discretization step size and $C$ depends on all the parameters. The explicit dependency is buried within the error analysis and is difficult to assess (see for example \cite[Theorems 5.2.4 and 5.3.3]{zhan:17}). However, in our case, the norm of the terminal condition increases 
when the parameter $\Delta$ gets small.
Therefore, it is crucial to consistently keep track of this dependency in all the constants that appear in the discretization error. 
The numerical solution of this type of BSDE enables the computation of the corresponding PDE \eqref{eq:related_PDE} solution with an infinite terminal condition, through the relation 
$u(t,x)=Y^{t,x}_t$. This approach therefore provides an effective algorithmic framework for solving such PDEs as well. To the best of our knowledge, no such algorithm currently exists within the framework of PDE theory.

\paragraph{Breakdown of the paper.}

Following a review of existing results on BSDEs with singular terminal conditions, Section \ref{sect:time_discre} of the paper is devoted to analyzing the error of the implicit Euler scheme. As previously mentioned, we present a self-contained analysis in order to accurately track the dependence on all relevant parameters (see Theorem \ref{thm:error}). To the best of our knowledge, this is the first instance in which the discretization error is quantified for a BSDE with a singular terminal condition. In the subsequent Section \ref{sect:approx}, we establish sufficient conditions on the generator $f$ under which the approximation required for the numerical implementation is valid (see Theorem \ref{thm:gene_case}), thereby extending known results for power-type nonlinearities. 
Technical proofs are relegated to the appendix (Section \ref{sect:appendix}).

\paragraph{Notations.}
Evoke that for $p\geq 1$
\begin{itemize}
\item $\mathbb D^p(0,T)$ is the space of all adapted c\`adl\`ag\footnote{French acronym for right-continuous with left limits} processes $X$ such that
$$\mathbb E \bigg[  \sup_{t\in [0,T]} |X_t|^p \bigg] < +\infty.$$
\item $\mathbb H^p(0,T)$ is the subspace of all predictable processes $X$ such that
$$\bE \bigg[ \bigg( \int_0^T |X_t|^2 dt\bigg)^{\frac{p}{2}} \ \bigg] < +\infty.$$
\item $\mathbb S^p(0,T) = \mathbb D^p(0,T) \times \mathbb H^p(0,T)$ and $\mathbb S^\infty(0,T) = \bigcap_{p\geq 1} \mathbb S^p(0,T)$. 
\end{itemize}
Whenever the notation $T-$ appears in the definition, we mean the set of all processes whose restrictions satisfy the respective property when $T-$ is replaced by any $T-\eps$, $\eps > 0$. For example, $\mathbb S^p(0,T-) = \bigcap_{\eps > 0}\mathbb S^p(0,T-\eps)$.
For any $t\in[0,T]$, conditional expectations with respect to $\cF_t$ are denoted by $\mathbb E_t[\cdot]$.

\subsection*{Setting and known results}

Here and in the rest of the paper, we keep the conditions of \cite{grae:popi:21}, that is, $T$ is fixed and we have:
\begin{Assumption} \label{ass:general_setting}
$\ $ 
\begin{enumerate}[label={\rm \textbf{(A\arabic*)}}]
\item \label{A1} {\it There exist three constants $0 < \etamin < \etamax$ and $\lambdamax \geq 0$ such that a.s. for any $t\in [0,T]$ it holds 
$$\etamin \leq \eta_t \leq \etamax,\qquad 0 \leq \lambda_t \leq \lambdamax.$$}
\item  \label{A2} {\it The process $\eta$ is an It\^o process
\begin{equation} \label{eq:eta-ito}
d\eta_t=b^\eta_t\,dt+\sigma^\eta_t\,dW_t, \quad t\in[0,T],
\end{equation}
such that the processes $b^\eta$ and $\sigma^\eta$ belong to $L^\infty(\Omega\times [0,T] ;\mathbb R)$ and $L^\infty(\Omega\times [0,T] ;\mathbb R^{1\times d})$, respectively. }
\item \label{A3} {\it The function $f:\mathbb R \to \mathbb R$ is continuous and non-increasing with $f(0)=0$. Furthermore, on $[0,\infty)$, $f$ is concave and of class $C^2$. }
\item \label{A4} {\it The function $G: (0,\infty) \to \mathbb R$ 
\begin{equation} \label{eq:def_G}
G(x):=\int_x^\infty \frac{1}{-f(y)}\,dy, \quad x \in (0,\infty),
\end{equation}
is well-defined on $(0,\infty)$ and with values in $(0,\infty)$.}
\end{enumerate}
\end{Assumption}

Furthermore, also as in \cite{grae:popi:21}, we denote by 
$$\phi\colon (0,\infty)\to(0,\infty)$$ 
the inverse of the function $G$ of \ref{A4}. Note that $\phi$ solves the ODE $\phi'=f\circ \phi$ with the initial condition $\lim_{x\to 0} \phi(x)=\infty$.

\begin{Example}
Starting from any solution $X$ of an SDE with Lipschitz coefficients $b$ and $\sigma$ such that $|b(t,0)| + \|\sigma(t,0)\| \leq C$, apply It\^o's formula to $\eta=\varphi(X)$, with $\varphi(x) = \dfrac{\etamax-\etamin}{\pi} \arctan(x) + \dfrac{\etamax+\etamin}{2}$. Then $\eta$ satisfies all the required conditions  {\rm \ref{A1}} and {\rm \ref{A2}}.
\end{Example}

From \cite[Proposition 4]{grae:popi:21}, under the previous Assumption \ref{ass:general_setting}, 
the BSDE \eqref{eq:sing_BSDE} has a unique non-negative solution $(Y,Z) \in \mathbb S^\infty(0,T-)$:
\begin{itemize}
\item A.s., for any $0\leq s \leq t < T$, 
$$Y_s = Y_t + \int_s^t \left[ \frac{1}{\eta_u} f(Y_u) + \lambda_u \right] du - \int_s^t Z_u dW_u.$$
\item $Y_t \geq 0$ for any $t \in [0,T)$, a.s.  
\item A.s. $\displaystyle \lim_{t\to T} Y_t =+\infty$.
\end{itemize}

\begin{Remark}[Comments on \ref{A1} and \ref{A2}]

In \cite{krus:popi:15}, it is only supposed that $1/\eta$ is in $L^1(\Omega\times [0,T] ;\mathbb R)$ and that $\eta^{p-1} \in L^\ell(\Omega\times [0,T] ;\mathbb R)$ for some $\ell > 1$, where $f(y) \le -y|y|^{q-1}$ holds for all $y \ge 0$ and $q,p \in (1,\infty)$ are H\"older conjugates. Moreover, a weak integrability condition is imposed on $\lambda$ and the non-negativeness of $\lambda$ could be relaxed in \cite{krus:popi:15}. Thus the two conditions \ref{A1} and \ref{A2} can be weakened for the existence of a minimal solution (see \cite[Theorem~1]{krus:popi:15}). 

Nonetheless, even if the terminal condition of the BSDE \eqref{eq:sing_BSDE} is bounded, the conditions of \cite{krus:popi:15} on $\eta$ and $\lambda$ have to be strengthened to apply known numerical schemes; see for example \cite[Condition (HY0)]{lion:dosr:szpr:15}, which essentially corresponds to {\rm \ref{A1}}. 
\end{Remark}

\begin{Remark}[Comments on \ref{A3} and \ref{A4}]
Again, existence can be obtained without the regularity condition on $f$ (see \cite[Theorem 1]{krus:popi:15}). The non-increasing condition can also be replaced by a more general monotonicity condition (see Assumption A1 and Remark 1 in [20]). The regularity is used to obtain the asymptotic expansion of $Y$, together with uniqueness of the solution. 

In \cite{krus:popi:15}, it is assumed that there exists some constant $q > 1$ such that for any $y \geq 0$ it holds $f(y)  \leq -y|y|^{q-1}$. In this case, {\rm \ref{A4}} is satisfied.  
\end{Remark}

Under the additional condition \ref{A5} of Section \ref{sect:approx}, it is proved in \cite[Theorem 10]{grae:popi:21} that $Y$ 
admits the representation 
\begin{equation} \label{eq:asymp_beha_Y_A}
Y_t = \phi \bigg(\bE_t \bigg[  \int_t^T \frac{1}{\eta_s} ds \bigg]\bigg) - \phi'\bigg( \bE_t \bigg[  \int_t^T \frac{1}{\eta_s} ds \bigg]\bigg) \widetilde H_t,\quad \forall t \in [0,T), 
\end{equation}
where $\phi$ is the inverse of $G$, and the non-negative process $\widetilde H$ is the unique solution of a BSDE with terminal condition 0 and with a singular\footnote{In the sense of the paper \cite{jean:reve:14}.} generator.  This expansion \eqref{eq:asymp_beha_Y_A} shows that there is a one-to-one correspondence between $Y$ and $\widetilde H$. However, it requires to compute the conditional expectation $\bE_t[  \int_t^T \frac{1}{\eta_s} ds ]$. Moreover $\widetilde H$ is obtained by a monotonicity argument without any rate of convergence, and due to the singularity of its related generator, the simulation of $\widetilde H$ is an open problem. Hence this result can not be used directly for a numerical application. 

In the special case where $f(y) = - y|y|^{q-1}$, $q>1$, in \cite[Section 4.1]{grae:popi:21} (see also \cite[Appendix]{caci:deni:popi:25} for more details), a different expansion is proved: with $p$ being the H\"older conjugate of $q$, it holds for all $t\in [0,T)$ that 
\begin{equation}\label{eq:asymp_exp_power}
Y_t = \phi \left( \dfrac{T-t}{\eta_t} \right) - \phi'\left( \dfrac{T-t}{\eta_t} \right)  H_t = \left(\dfrac{ (p-1) \eta_t}{T-t} \right)^{p-1} +  \left(\dfrac{ (p-1) \eta_t}{T-t} \right)^{p} H_t.
\end{equation}
The drawback is that $H$ is not non-negative. But the advantages are the disappearance of the conditional  expectation and that an upper bound on $|H|$ of the form $C(T-t)^2$ is available, which shows that  
\begin{equation} \label{eq:behavior_T}
Y_t - \xi_t := Y_t - \bigg(\dfrac{ (p-1) \eta_t}{T-t} \bigg)^{p-1} =  O\big((T-t)^{2-p}\big).
\end{equation}
Hence, for $1< p<2$, that is $q>2$, we have a good approximation of the solution $Y$ close to the time $T$.

In the rest of the paper, we extend the expansion \eqref{eq:asymp_exp_power} to more general generators and explain how to use the property \eqref{eq:behavior_T} for numerical purposes.

\section{Time discretization of singular BSDEs using the implicit Euler scheme} \label{sect:time_discre}

Let us evoke the following result (see \cite[Lemma 3]{grae:popi:21}): for all $0\leq t < T$, a.s.\ 
\begin{equation}\label{eq:boundofYintermsofTheta}
    0\leq Y_t \leq \Theta(T-t), 
\end{equation}
where $\Theta$ solves on $(0,\infty)$ the ODE 
\begin{equation}
    \Theta' = \dfrac{f(\Theta)}{\etamax} + \lambdamax
\end{equation}
with the initial condition $\lim_{x\to 0} \Theta(x) = \infty$. 
Hence for any $\Delta \in (0,T]$, the process $Y$ is bounded on $[0,T-\Delta]$.

In this section, we assume that: 
\begin{Assumption}[Approximation of $Y$] \label{ass:approximation}
There exist $\tau \in [0,T)$ and an explicit, adapted stochastic process $(\xi_t)_{t\in [\tau,T)}$, $C\in[1,\infty)$ and $\alpha \in (0,\infty)$ such that for all $t \in [\tau,T)$ we have
\begin{equation}\label{eq:cond_term}
\|Y_{t}- \xi_t\|_{\infty}\le C (T-t)^\alpha.
\end{equation}
\end{Assumption}

Under this assumption, together with 
Assumption~\ref{ass:general_setting}, 
we deduce using \eqref{eq:boundofYintermsofTheta} that 
\begin{equation} 
  \label{eq:a_priori_estim_term_cond}
\forall \Delta \in (0,T-\tau]\colon\quad \|\xi_{T-\Delta} \|_{\infty} \leq \Theta(\Delta) + C\Delta^\alpha .  
\end{equation}

\begin{Remark}
Let Assumption~\ref{ass:general_setting} be satisfied. 
Under the additional condition \ref{A5} (see Section~\ref{sec:mainresult}), from \cite[Lemma 17]{grae:popi:21}, for all $\varepsilon >0$, there exists $T_\varepsilon \in [0,T)$ such that for all $t\in[T_{\varepsilon},T)$ it holds 
\begin{equation}
    \Theta(T-t) \leq \phi \bigg( \dfrac{T-t}{(1+\varepsilon)\etamax} \bigg).
\end{equation}
In particular, 
under \ref{A5} and Assumption~\ref{ass:approximation}, 
it holds for every $\Delta \in (0,T-\max(\tau,T_\varepsilon)]$ that 
\begin{equation} \label{eq:a_priori_estim_term_cond_bis}
\| \xi_{T-\Delta} \|_{\infty} \leq \Theta(\Delta) + C\Delta^\alpha \leq \phi \left( \dfrac{\Delta}{(1+\varepsilon)\etamax}\right) + C \Delta^\alpha.
\end{equation}
Moreover, note that \eqref{eq:asymp_beha_Y} in Section~\ref{sec:mainresult} suggests the choice 
$ \xi_t = \phi((T-t)/\eta_t)$, $t\in[0,T)$
(see also the discussion below Theorem~\ref{thm:gene_case}). 
In this case, we directly obtain for all $\Delta \in (0,T]$ that 
\begin{equation} \label{eq:a_priori_estim_term_cond_ter}
\| \xi_{T-\Delta} \|_{\infty} \leq \phi \left( \dfrac{\Delta}{\etamax}\right).
\end{equation}
\end{Remark}

For every $\Delta \in (0,T-\tau]$ let $(Y^\Delta,Z^\Delta)$ be the solution of \eqref{eq:sing_BSDE} on $[0,T-\Delta]$ with the terminal condition $Y^\Delta_{T-\Delta}=\xi_{T-\Delta}$. Using the Conditions \ref{A1} and \ref{A3}, from classical a priori results for monotone BSDEs (see \cite[Section 5.3.1]{pard:rasc:14}) and 
\eqref{eq:cond_term} 
we get that there exists $C_1 \in [1,\infty)$ such that for all $\Delta\in (0,T-\tau]$ and for any $0\leq t \leq T-\Delta$ it holds 
\begin{align} \nonumber 
\bE_t\bigg[\sup_{r\in [t,T-\Delta]}|Y_r-Y_r^\Delta|^2+\int_t^{T-\Delta}\|Z_r-Z_r^\Delta\|^2dr \bigg] 
& \le C_1 \, \bE_t \big[ |Y_{T-\Delta}-\xi_{T-\Delta}|^2 \big]\\  \label{eq:estim_approx_BSDE}
& \le C_1 C \Delta^{2\alpha}.
\end{align}

Now for every $0< \Delta \leq T-\tau$ and $h>0$, let $(Y^{h,\Delta}, Z^{h,\Delta})$ be a time discretization of $(Y^\Delta,Z^\Delta)$ ($h$ being the step size). 
We need an estimate of the error $Y^\Delta-Y^{h,\Delta}$. Since the BSDE \eqref{eq:sing_BSDE} is monotone and if $f$ grows at most polynomially, we could use the results of Lionnet et al. \cite{lion:dosr:szpr:15}. Unfortunately, the authors are not explicit about the dependence of the constants on the parameters. Since $Y$ explodes at the time $T$, the value of $\xi_{T-\Delta}$ may be very large when $\Delta$ is small.

\subsection{Error analysis for the implicit Euler scheme for \eqref{eq:sing_BSDE} with bounded terminal condition}\label{subsec:imp_Euler_reg}

In this subsection we consider the BSDE \eqref{eq:sing_BSDE} but with a {\it bounded} and non-negative $\mathcal F_{T}$-measurable random condition $\xi$. We set $a_t=1/\eta_t$, $t\in [0,T]$, and consider the BSDE
\begin{equation}\label{eq:reg_BSDE}
    dY_t = - \left( a_t f(Y_t) + \lambda_t  \right) dt  + Z_tdW_t, \qquad t\in [0,T], \qquad Y_{T}=\xi.
\end{equation}
We first present bounds for the solution components $Y$ and $Z$ that we use in Proposition \ref{prop:error_imp_euler} below.

\begin{Lemma}\label{lem:ex_uni_ub_yz}
    Assume that Conditions {\rm \ref{A1}} and {\rm \ref{A3}} are satisfied. Then there exists a unique solution $(Y,Z)\in \mathbb S^2(0,T) $ of \eqref{eq:reg_BSDE}. The solution satisfies for all $t\in [0,T]$ a.s.\ that
    \begin{equation}
        \begin{split}
            0\le Y_t&\le \|\xi\|_\infty + T\lambdamax,\\
            \bE_t\bigg[\int_{t}^T Z_s^2ds \bigg] &\le \|\xi\|_\infty^2 +2(T-t)(\|\xi\|_\infty+T\lambdamax)\lambdamax.
        \end{split}
    \end{equation}
\end{Lemma}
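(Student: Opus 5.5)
Existence and uniqueness follow from the classical theory of monotone BSDEs (e.g.\ \cite[Section 5.3.1]{pard:rasc:14}). The generator $g(t,y)=a_tf(y)+\lambda_t$ is continuous and non-increasing in $y$, hence satisfies the monotonicity condition $(y-y')(g(t,y)-g(t,y'))\le 0$. Moreover $|g(t,0)|=\lambda_t\le\lambdamax$ is bounded by \ref{A1}, and $\xi$ is bounded. The only delicate point is that $f$ may grow superlinearly on the positive side. To handle this, I first replace $f$ by the truncation $\tilde f(y)=f\big(\min(\max(y,0),M)\big)$ with $M=\|\xi\|_\infty+T\lambdamax$. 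This $\tilde f$ is Lipschitz, since $f$ is $C^2$ on $[0,M]$, so the truncated BSDE has a unique solution in $\mathbb S^2(0,T)$. I then show that this solution satisfies $0\le Y\le M$, so it also solves \eqref{eq:reg_BSDE}. Uniqueness in $\mathbb S^2$ for the original equation follows from the monotonicity: apply It\^o's formula to $|Y-Y'|^2$ and drop the nonpositive cross term.

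The bounds on $Y$ come from comparison, or directly from the linearization.

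\emph{Upper bound.} Since $f\le 0$ on $[0,\infty)$, we have $g(t,y)\le\lambda_t\le\lambdamax$ for $y\ge 0$. The deterministic process $\bar Y_t=\|\xi\|_\infty+(T-t)\lambdamax$ solves the BSDE with generator $\lambdamax$ and a larger terminal value, so the comparison theorem gives $Y_t\le\bar Y_t\le M$.

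\emph{Lower bound.} The process $0$ is a subsolution: $\xi\ge0$, and $g(t,0)=\lambda_t\ge 0$.

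Alternatively, I can write $f(Y_s)=\beta_sY_s$ with $\beta_s=f(Y_s)/Y_s\le 0$ bounded (on the truncated problem) and use the linear representation
\[
Y_t=\bE_t\Big[e^{\int_t^Ta_s\beta_sds}\xi+\int_t^Te^{\int_t^ra_s\beta_sds}\lambda_rdr\Big].
\]
This gives $0\le Y_t\le \|\xi\|_\infty+(T-t)\lambdamax$ immediately.

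For the $Z$ bound, I apply It\^o's formula to $Y^2$ on $[t,T]$ and take $\bE_t$. The stochastic integral is a true martingale because $Y$ is bounded and $Z\in\mathbb H^2$. This gives
\[
\bE_t[\xi^2]+2\bE_t\Big[\int_t^TY_s(a_sf(Y_s)+\lambda_s)ds\Big]=Y_t^2+\bE_t\Big[\int_t^T Z_s^2ds\Big].
\]
Since $Y_s\ge0$ and $f(Y_s)\le0$, the term $Y_sa_sf(Y_s)$ is nonpositive and can be dropped. Using $Y_s\lambda_s\le (\|\xi\|_\infty+T\lambdamax)\lambdamax$ and $Y_t^2\ge0$ then yields the claimed inequality.

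The main obstacle is the rigorous removal of the truncation: I must verify that the truncated solution satisfies $0\le Y\le M$ using only the comparison theorem for Lipschitz BSDEs. Since $\tilde f$ coincides with $f$ on $[0,M]$, this closes the argument.
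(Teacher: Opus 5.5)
Your proof is correct. The lower bound and the $Z$-estimate coincide with the paper's. For the $Z$-estimate, both apply It\^o's formula to $Y^2$, drop the nonpositive term $Y_sa_sf(Y_s)$, and bound $Y_s\lambda_s$ by $(\|\xi\|_\infty+T\lambdamax)\lambdamax$.

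The differences lie in the well-posedness and the upper bound.
\begin{itemize}
\item The paper invokes the existence and uniqueness result for monotone BSDEs \cite[Proposition 5.24]{pard:rasc:14} and the comparison principle \cite[Proposition 5.33]{pard:rasc:14}. It then obtains the upper bound by taking $\bE_t$ in \eqref{eq:reg_BSDE}: once $Y\ge 0$ is known, $f(Y_s)\le 0$, so $Y_t\le \bE_t[\xi]+(T-t)\lambdamax$.
\item You instead truncate $f$ at level $M=\|\xi\|_\infty+T\lambdamax$ and solve a Lipschitz BSDE. You show $0\le \tilde Y\le M$ by comparison with $0$ and with the deterministic supersolution $\|\xi\|_\infty+(T-t)\lambdamax$, or via the linearized representation, which gives both bounds at once. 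Uniqueness you prove directly from monotonicity.
\end{itemize}

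Your route is self-contained and needs only Lipschitz theory; the a priori bound is precisely what makes the truncation removable. The paper's route is shorter, since it delegates the superlinear growth of $f$ to the monotone theory as a black box. One further difference: in your truncated problem the upper bound does not require proving $Y\ge 0$ first, because $\tilde f\le 0$ on all of $\mathbb R$. The paper's conditional-expectation argument, by contrast, relies on the lower bound having been established.
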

\begin{proof}
    Existence and uniqueness follow from \cite[Proposition 5.24]{pard:rasc:14}. The lower bound comes from the comparison principle for BSDEs (see \cite[Proposition 5.33]{pard:rasc:14}). Then taking the conditional expectation w.r.t. $\mathcal F_t$ in \eqref{eq:reg_BSDE} gives the upper bound for $Y$. 

    To establish the bound for $Z$, we apply It\^o's formula to obtain for all $t\in [0,T]$ that 
    $$
    Y_t^2+\int_t^TZ_s^2ds = \xi^2 + 2\int_t^TY_s(a_sf(Y_s)+\lambda_s)ds-2\int_t^TZ_sY_sdW_s.
    $$
    The facts that $Z\in \mathbb H^2(0,T)$ and that $Y$ is bounded ensure that $\bE_t[\int_t^TZ_sY_sdW_s]=0$ for all $t\in[0,T]$. This, the fact that $f$ is non-increasing with $f(0)=0$ and the upper bound for $Y$ imply for all $t\in[0,T]$ that
    $$
    \bE_t\bigg[\int_t^T Z_s^2 ds \bigg] \le \|\xi\|^2_\infty+2(T-t)(\|\xi\|_\infty+T\lambdamax)\lambdamax.
    $$
    This completes the proof.
\end{proof}

The main aim of this subsection is to provide a complete error analysis of the implicit Euler scheme associated with \eqref{eq:reg_BSDE}. While such analyses have already been conducted in the literature -- often in more general settings (see, e.g., \cite{gobe:lemo:wari:05,gobe:lemo:wari:06,lion:dosr:szpr:15}) -- we include a detailed treatment here to keep the paper self-contained. Moreover, for our subsequent application, it is crucial to precisely quantify the influence of the norm of the terminal condition on the approximation error.

To this end, let $\pi=\{t_0,t_1,\ldots, t_N=T\}\subset [0,T]$ be an equidistant mesh with width $h=T/N$. Let $(\ol a_i)_{i=0,\ldots, N}$ and $(\ol \lambda_i)_{i=0,\ldots, N}$ be $(\mathcal{F}_{t_i})_{i=0,\ldots, N}$-adapted processes. We think of $(\ol a_i)_{i=0,\ldots, N}$ and $(\ol \lambda_i)_{i=0,\ldots, N}$ as discrete-time approximations of $(a_{t_i})_{i=0,\ldots, N}$ and $(\lambda_{t_i})_{i=0,\ldots, N}$. For this reason we assume for all $i\in\{0,\ldots,N\}$ that $0\le \ol a_i \le 1/\etamin$ and $0\le \ol \lambda_i \le \lambdamax$. Moreover, let $\ol \xi $ be a non-negative $\mathcal F_{T}$-measurable random variable with $\ol \xi \le \|\xi\|_\infty$.

Then we consider the $(\mathcal{F}_{t_i})_{i=0,\ldots, N}$-adapted process $(\ol Y_i)_{i=0,\ldots,N}$ defined via backward recursion by
\begin{equation}\label{eq:impl_euler}
    \ol Y_i=\bE_{t_i}[\ol Y_{i+1}] + h\ol a_i f(\ol Y_i) +h\ol \lambda_i , \qquad i=0,\ldots,N-1, \qquad \ol Y_N=\ol \xi.
\end{equation}
Note that since for all $a\ge 0$ the function $F_a(y)= y- ha f(y)$, $y\in [0,\infty)$, is strictly increasing (because of 
\ref{A3}), is equal to $0$ at $0$, satisfies $\lim_{y\to \infty}F_a(y)=\infty$ 
and since $\ol a_i$, $\ol \xi$ and $\ol \lambda_i$ are non-negative 
it follows by induction that there exists a unique non-negative solution $\ol Y_i$ in~\eqref{eq:impl_euler} for all $i=0,\ldots,N-1$. 

\begin{Proposition}\label{prop:error_imp_euler}
Assume that the Conditions {\rm \ref{A1}} and {\rm \ref{A3}} are satisfied and let $K=\|\xi\|_\infty + T \lambdamax$. Then it holds for all $i\in \{0,\ldots,N\}$ a.s.\ that
    \begin{equation*}
\begin{split}
|Y_{t_i}-\ol Y_i|&\leq  
\bE_{t_i}\bigg[|\xi - \ol \xi |+|f(K)|\int_{t_i}^T|a_s-\ol a_s|ds+\int_{t_i}^T|\lambda_s-\ol \lambda_s|ds \bigg]\\
& \quad +\bigg[\frac{|f'(K)|T}{2\etamin}\bigg(\frac{1}{\etamin}|f(K)|+\lambdamax\bigg)+\frac{\sup_{y\in [0,K]}|f''(y)|}{2\etamin}  (\|\xi\|^2_\infty+2(T-t_i)K\lambdamax)\bigg]h
\end{split}    
\end{equation*} 
where $\ol a_s = \ol a_k$ and $\ol \lambda_s = \ol \lambda_k$ on the interval $[t_k,t_{k+1})$. 
\end{Proposition}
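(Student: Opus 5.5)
We compare the continuous solution at grid points with the implicit Euler recursion \eqref{eq:impl_euler}. Write the BSDE \eqref{eq:reg_BSDE} between $t_i$ and $t_{i+1}$ and take $\bE_{t_i}$; the martingale part vanishes because $Z\in\mathbb H^2$. This gives
\[
Y_{t_i}=\bE_{t_i}[Y_{t_{i+1}}]+\bE_{t_i}\Big[\int_{t_i}^{t_{i+1}}\big(a_sf(Y_s)+\lambda_s\big)ds\Big].
\]
We rewrite this as
\[
Y_{t_i}=\bE_{t_i}[Y_{t_{i+1}}]+h\ol a_i f(Y_{t_i})+h\ol\lambda_i+R_i .
\]
The remainder $R_i$ collects three pieces:
\[
\bE_{t_i}\Big[\int_{t_i}^{t_{i+1}}(a_s-\ol a_s)f(Y_s)\,ds\Big],\qquad \bE_{t_i}\Big[\int_{t_i}^{t_{i+1}}(\lambda_s-\ol\lambda_s)\,ds\Big],
\]
and
\[
\bE_{t_i}\Big[\int_{t_i}^{t_{i+1}}\ol a_i\big(f(Y_s)-f(Y_{t_i})\big)ds\Big].
\]
Lemma~\ref{lem:ex_uni_ub_yz} gives $0\le Y\le K$, and $f$ is non-increasing with $f(0)=0$. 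Hence $|f(Y_s)|\le|f(K)|$, which bounds the first piece by $|f(K)|\,\bE_{t_i}\int_{t_i}^{t_{i+1}}|a_s-\ol a_s|\,ds$.

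\textbf{Stability of the implicit step.} Set $\delta_i=Y_{t_i}-\ol Y_i$ and subtract \eqref{eq:impl_euler}:
\[
\delta_i-h\ol a_i\big(f(Y_{t_i})-f(\ol Y_i)\big)=\bE_{t_i}[\delta_{i+1}]+R_i .
\]
Since $f$ is non-increasing, $\big(f(Y_{t_i})-f(\ol Y_i)\big)\delta_i\le 0$, so the left-hand side has absolute value at least $|\delta_i|$. Therefore $|\delta_i|\le \bE_{t_i}|\delta_{i+1}|+|R_i|$. Iterating by the tower property, and using $\delta_N=\xi-\ol\xi$, gives
\[
|\delta_i|\le \bE_{t_i}\Big[|\xi-\ol\xi|+\sum_{k\ge i}|R_k|\Big].
\]
This produces the first bracket of the claimed bound. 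It remains to show that $\bE_{t_i}\sum_{k\ge i}|\bE_{t_k}[\text{third piece}]|$ is bounded by the second bracket times $h$.

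\textbf{Local consistency term (main step).} For $s\in[t_k,t_{k+1}]$, apply Itô's formula to $f(Y)$, which is valid since $f$ is $C^2$ on $[0,\infty)$ and $Y\in[0,K]$:
\[
f(Y_s)-f(Y_{t_k})=-\int_{t_k}^s f'(Y_r)\big(a_rf(Y_r)+\lambda_r\big)dr+\tfrac12\int_{t_k}^s f''(Y_r)Z_r^2\,dr+\int_{t_k}^s f'(Y_r)Z_r\,dW_r .
\]
The stochastic integral has zero conditional expectation, since $f'$ is bounded on $[0,K]$ and $Z\in\mathbb H^2$. By concavity $|f'|$ is maximal at $K$ on $[0,K]$, so $|f'(Y_r)|\le|f'(K)|$. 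Also $|a_rf(Y_r)+\lambda_r|\le |f(K)|/\etamin+\lambdamax$. Multiplying by $\ol a_k\le 1/\etamin$ and integrating over $s$, the drift part contributes at most
\[
\frac{h^2}{2\etamin}|f'(K)|\Big(\frac{|f(K)|}{\etamin}+\lambdamax\Big)
\]
per step. Summing over at most $T/h$ steps gives the first term of the second bracket.

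For the $f''$ part, bound $\int_{t_k}^s Z_r^2\,dr\le\int_{t_k}^{t_{k+1}}Z_r^2\,dr$ and integrate in $s$ to get a factor $h$. Summing over $k\ge i$ then gives
\[
\frac{h\,\sup_{[0,K]}|f''|}{2\etamin}\,\bE_{t_i}\Big[\int_{t_i}^T Z_r^2\,dr\Big].
\]
By Lemma~\ref{lem:ex_uni_ub_yz} this is at most
\[
\frac{h\,\sup_{[0,K]}|f''|}{2\etamin}\big(\|\xi\|_\infty^2+2(T-t_i)K\lambdamax\big).
\]

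The delicate point is bookkeeping rather than ideas. First, the Itô expansion must be applied inside $\bE_{t_k}$, with the martingale term shown to be a true martingale. Second, constants should be bounded deterministically; only the $Z$ term is kept inside the conditional expectation, so that the tower property yields a single integral $\int_{t_i}^T Z^2$ and no $N$-fold loss. Finally, the $\tfrac12$ factors must be tracked so that the constants match the statement exactly.
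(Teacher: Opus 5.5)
Your proof is correct and follows essentially the same path as the paper. It uses the same residual $R_i$ with its three pieces, monotonicity of $f$ to obtain $|Y_{t_i}-\ol Y_i|\le \bE_{t_i}|Y_{t_{i+1}}-\ol Y_{i+1}|+|R_i|$, It\^o's formula for $f(Y)$ with the $Z$-term kept inside the conditional expectation, and Lemma~\ref{lem:ex_uni_ub_yz}. The only difference is that you reach that recursion by a single subtraction, whereas the paper passes through $\ol\Gamma_i(Y_{t_{i+1}})$ to split the error into a local error (handled by monotonicity) and a stability error (handled by the $1$-Lipschitz property of $F_a^{-1}$); both give the same bound.
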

\begin{proof}
We proceed by splitting the error into the one-step error and the stability error. To this end, for $a\ge 0$ let $F_a\colon [0,\infty) \to [0,\infty)$, $F_a(y)=y-ahf(y)$. As argued above, $F_a$ is a bijection. Now, for every $i\in \{0,\ldots,N-1\}$ let $ \ol \Gamma_i \colon L_+^\infty(\mathcal F_{t_{i+1}})\to L_+^\infty(\mathcal F_{t_{i}})$ satisfy for all non-negative, bounded, $\mathcal F_{t_{i+1}}$-measurable random variables $\mathcal Y$ that
$$
\ol \Gamma_i(\mathcal Y)=F^{-1}_{\ol a_i}(\bE_{t_i}[\mathcal Y]+ h\ol \lambda_i).
$$
Note that
$$
\ol \Gamma_i(\mathcal Y)=\bE_{t_i}[\mathcal Y]+ h(\ol a_if(\ol \Gamma_i(\mathcal Y)) + \ol \lambda_i) .
$$
Moreover, for every $i\in \{0,\ldots,N-1\}$ let $\Gamma_{t,i}\colon L_+^\infty(\mathcal F_{t_{i+1}})\to L_+^\infty(\mathcal F_{t})$ be the solution operator of the BSDE \eqref{eq:reg_BSDE} on $[t_i,t_{i+1}]$, that is, it holds for all non-negative, bounded, $\mathcal F_{t_{i+1}}$-measurable random variables $\mathcal Y$ that
$$
\Gamma_{t,i}(\mathcal Y)=\bE_{t}\left[\mathcal Y + \int_{t}^{t_{i+1}} (a_s f(\Gamma_{s,i}(\mathcal Y)) +\lambda_s) ds \right], \qquad t\in [t_i,t_{i+1}].
$$
Well-posedness of  $\Gamma_{t,i}$ follows from Lemma \ref{lem:ex_uni_ub_yz}.
Then we have for all $i\in \{0,\ldots,N-1\}$ that
\begin{equation}\label{eq:error_decomp}
    |Y_{t_i}-\ol Y_i|=|\Gamma_{t_i,i}(Y_{t_{i+1}})-\ol \Gamma_i(\ol Y_{i+1})|
\le |\Gamma_{t_i,i}(Y_{t_{i+1}})-\ol \Gamma_i(Y_{t_{i+1}})|+|\ol \Gamma_i(Y_{t_{i+1}})-\ol \Gamma_i(\ol Y_{i+1})| .
\end{equation}
The first term is the local error and the second the stability error. 

Let us first consider the stability error. Note that $F_a'(y)=1-ahf'(y) \ge 1$. This implies that $\frac{d}{dy}F_a^{-1}(y)\le 1$. From this it follows that $F_a^{-1}$ has a Lipschitz constant smaller than or equal to $1$ and hence we get for all $i\in \{0,\ldots,N-1\}$ that 
\begin{equation}\label{eq:stab_error}
    |\ol \Gamma_i(Y_{t_{i+1}})-\ol \Gamma_i(\ol Y_{i+1})|\le |\bE_{t_i}[Y_{t_{i+1}}-\ol Y_{i+1}]|.
\end{equation}

Now we consider the one-step error. Note that for all $i\in \{0,\ldots,N-1\}$ we have
\begin{align*}
\Gamma_{t_i,i}(Y_{t_{i+1}})-\ol \Gamma_i(Y_{t_{i+1}})
&=
\bE_{t_i}\left[\int_{t_i}^{t_{i+1}}a_s f(Y_s) ds - h\ol a_i f (\ol \Gamma_i(Y_{t_{i+1}})) \right]\\
& \quad +  \bE_{t_i}\left[\int_{t_i}^{t_{i+1}} \lambda_s ds \right] - h \ol \lambda_i \\
&=\bE_{t_i}\left[\int_{t_i}^{t_{i+1}} (a_s f(Y_s) - \ol a_{i} f(Y_{t_i})) ds\right]\\
&\quad
+ h\ol a_{i} \left( f(Y_{t_i}) - f(  \ol \Gamma_i(Y_{t_{i+1}})) \right) +  \bE_{t_i}\left[\int_{t_i}^{t_{i+1}} ( \lambda_s - \ol \lambda_i ) ds \right] .
\end{align*}
This implies for all $i\in \{0,\ldots,N-1\}$ that
\begin{equation}\label{eq:1133}
\begin{split}
&\left(\Gamma_{t_i,i}(Y_{t_{i+1}})-\ol \Gamma_i(Y_{t_{i+1}})\right)^2
+ h^2\ol a_{i}^{2} \left( f(\Gamma_{t_i,i}(Y_{t_{i+1}}))- f ( \ol \Gamma_i(Y_{t_{i+1}})) \right)^2\\
&\quad =\left(  \bE_{t_i}\left[\int_{t_i}^{t_{i+1}} (a_s f(Y_s) -\ol a_{i} f(Y_{t_i})) ds\right]
+  \bE_{t_i}\left[\int_{t_i}^{t_{i+1}} ( \lambda_s - \ol \lambda_i ) ds \right]  \right)^2\\
&\qquad
+2 h\ol a_{i}( f(\Gamma_{t_i,i}(Y_{t_{i+1}}))- f (\ol \Gamma_i(Y_{t_{i+1}})))\left(\Gamma_{t_i,i}(Y_{t_{i+1}})-\ol \Gamma_i(Y_{t_{i+1}})\right).
\end{split}
\end{equation}
For every $i\in \{0,\ldots,N-1\}$ let 
$$
R_i= \left|   \bE_{t_i}\left[\int_{t_i}^{t_{i+1}} (a_s f(Y_s) -\ol a_{i} f(Y_{t_i})) ds\right]
+  \bE_{t_i}\left[\int_{t_i}^{t_{i+1}} ( \lambda_s - \ol \lambda_i ) ds \right] \right|.
$$
Then 
the fact that $y\mapsto f(y)$ is non-increasing 
(Condition \ref{A3}) and \eqref{eq:1133} yield 
for all $i\in \{0,\ldots,N-1\}$ that 
\begin{equation*}
\left|\Gamma_{t_i,i}(Y_{t_{i+1}})-\ol \Gamma_i(Y_{t_{i+1}})\right| \le R_i.
\end{equation*}
Combining this with \eqref{eq:stab_error} and \eqref{eq:error_decomp} implies for every $i\in \{0,\ldots,N-1\}$ that 
$$ |Y_{t_i}-\ol Y_i|\leq  \bE_{t_i} |Y_{t_{i+1}}-\ol Y_{i+1} | + R_i$$
and thus by induction
\begin{equation}\label{eq:error_y_Ri}
    |Y_{t_i}-\ol Y_i|\leq  \bE_{t_i} |\xi - \ol \xi | + \sum_{k=i}^{N-1}\bE_{t_i}[R_k].
\end{equation}
Note that for all $i\in \{0,\ldots,N-1\}$ it holds 
\begin{equation}\label{eq:decomp_Ri}
\begin{split}
R_i &\leq  \left|   \bE_{t_i}\left[\int_{t_i}^{t_{i+1}} (a_s f(Y_s) -\ol a_{i} f(Y_{t_i})) ds\right]  \right| 
+ \left|\bE_{t_i}\left[\int_{t_i}^{t_{i+1}}  (\lambda_s - \ol \lambda_i )ds \right]\right|\\
&\le  \left|   \bE_{t_i}\left[\int_{t_i}^{t_{i+1}} (a_s-\ol a_i) f(Y_s)ds \right]  \right|  + \left|   \bE_{t_i}\left[\int_{t_i}^{t_{i+1}}\ol a_{i} (f(Y_s)-f(Y_{t_i}))ds\right]  \right| \\
&\quad
+ \left|\bE_{t_i}\left[\int_{t_i}^{t_{i+1}}  (\lambda_s - \ol \lambda_i )ds \right]\right| .
\end{split}    
\end{equation}
Recall that $K=\|\xi\|_\infty + T \lambdamax$.
The fact that by Lemma \ref{lem:ex_uni_ub_yz} it holds a.s.\ for all $t\in [0,T]$ that $Y_t\le K$ together with the monotonicity of $f$ implies that $|f(Y_t)| \leq -f (K)$. Therefore we have for all $i\in \{0,\ldots,N-1\}$ that 
\begin{equation}\label{eq:aux_Ri_1}
    \left|   \bE_{t_i}\left[\int_{t_i}^{t_{i+1}} (a_s-\ol a_i) f(Y_s)ds \right]  \right|\le   - f (K)\bE_{t_i}\left[\int_{t_i}^{t_{i+1}} |a_s-\ol a_i| ds \right] .
\end{equation}
Moreover, note that by It\^o's formula, together with the boundedness results in Lemma~\ref{lem:ex_uni_ub_yz}, which ensure that the stochastic integral vanishes in expectation, we have for all $i\in \{0,\ldots,N-1\}$ that 
\begin{equation*}
\begin{split}
    & \left|   \bE_{t_i}\left[\int_{t_i}^{t_{i+1}}\ol a_{i} (f(Y_s)-f(Y_{t_i}))ds\right]  \right| \\
    & =\ol a_{i}\left|   \bE_{t_i}\left[\int_{t_i}^{t_{i+1}}\int_{t_i}^s \Big(-f'(Y_r)(a_rf(Y_r)+\lambda_r)+\frac{1}{2}f''(Y_r)Z_r^2 \Big) \,dr \, ds\right]  \right|.
\end{split}
\end{equation*}
This together with the fact that by Lemma \ref{lem:ex_uni_ub_yz} it holds a.s.\ for all $t\in [0,T]$ that $Y_t\le K$, Assumption \ref{A1}, Assumption \ref{A3} and Fubini's theorem implies for all $i\in \{0,\ldots,N-1\}$ that 
\begin{equation*}
\begin{split}
    \left|   \bE_{t_i}\left[\int_{t_i}^{t_{i+1}}\ol a_{i} (f(Y_s)-f(Y_{t_i}))ds\right]  \right|
    &\le \frac{1}{2\etamin}|f'(K)|\left(\frac{1}{\etamin}|f(K)|+\lambdamax\right)h^2\\
    &\quad +\frac{\sup_{y\in [0,K]}|f''(y)|}{2\etamin}  \bE_{t_i}\left[\int_{t_i}^{t_{i+1}} (t_{i+1}-r)Z_r^2 dr \right] \\
    &\le \frac{1}{2\etamin}|f'(K)|\left(\frac{1}{\etamin}|f(K)|+\lambdamax\right)h^2\\
    &\quad +\frac{\sup_{y\in [0,K]}|f''(y)|}{2\etamin}  \bE_{t_i}\left[\int_{t_i}^{t_{i+1}} Z_r^2 dr \right]h .
\end{split}
\end{equation*}
Combining this with \eqref{eq:error_y_Ri}, \eqref{eq:decomp_Ri} and \eqref{eq:aux_Ri_1} yields for all $i\in \{0,\ldots,N-1\}$ that 
\begin{equation*}
\begin{split}
|Y_{t_i}-\ol Y_i|&\leq  
\bE_{t_i}\left[|\xi - \ol \xi |+|f(K)|\int_{t_i}^T|a_s-\ol a_s|ds+\int_{t_i}^T|\lambda_s-\ol \lambda_s|ds \right]\\
&\quad +\frac{1}{2\etamin}|f'(K)|\left(\frac{1}{\etamin}|f(K)|+\lambdamax\right)Th+\frac{\sup_{y\in [0,K]}|f''(y)|}{2\etamin}  \bE_{t_i}\left[\int_{t_i}^{T} Z_r^2 dr \right]h.
\end{split}    
\end{equation*}
Applying Lemma \ref{lem:ex_uni_ub_yz} once more we obtain for all $i\in \{0,\ldots,N-1\}$ that 
\begin{equation*}
\begin{split}
|Y_{t_i}-\ol Y_i|&\leq  
\bE_{t_i}\left[|\xi - \ol \xi |+|f(K)|\int_{t_i}^T|a_s-\ol a_s|ds+\int_{t_i}^T|\lambda_s-\ol \lambda_s|ds \right]\\
& \quad +\frac{1}{2\etamin}|f'(K)|\left(\frac{1}{\etamin}|f(K)|+\lambdamax\right)Th \\
& \quad +\frac{\sup_{y\in [0,K]}|f''(y)|}{2\etamin}  \left[ \|\xi\|^2_\infty+2(T-t_i)K\lambdamax\right]h.
\end{split}    
\end{equation*} 
This completes the proof.
\end{proof}

\subsection{Error analysis for the implicit Euler scheme for \eqref{eq:sing_BSDE} with singular terminal condition}\label{subsec:imp_Euler_sing}

In this subsection we come back to the BSDE \eqref{eq:sing_BSDE} with the singular terminal condition $\xi=\infty$. We assume that Assumption \ref{ass:approximation} holds. 
Therefore, we can apply the results of Section~\ref{subsec:imp_Euler_reg} for the time discretization of $Y^\Delta$ starting from $\xi_{T-\Delta}$ on every time interval $[0,T-\Delta]$ with $\Delta\in (0,T-\tau]$.
In the following we use the setting and notation of Section~\ref{subsec:imp_Euler_reg} with the exception that the time horizon $T$ is now given by $T-\Delta$. In particular, $\ol \xi$ is $\mathcal F_{T-\Delta}$-measurable.

\begin{Corollary}\label{coro:error_imp_euler}
Assume that Conditions {\rm \ref{A1}} and {\rm \ref{A3}} are satisfied. Let $\Delta\in (0,T-\tau]$ and let $K=\|\xi_{T-\Delta}\|_\infty + T \lambdamax$. Then it holds for all $i\in \{0,\ldots,N\}$ a.s.\ that
\begin{equation*}
\begin{split}
& |Y^\Delta_{t_i}-\ol Y_i| \\
& \leq  
\bE_{t_i}\bigg[| \xi_{T-\Delta}- \ol \xi |+|f(K)|\int_{t_i}^T|a_s-\ol a_s|ds+\int_{t_i}^T|\lambda_s-\ol \lambda_s|ds \bigg]\\
&\quad + \bigg[\frac{|f'(K)|T}{2\etamin}\bigg(\frac{1}{\etamin}|f(K)|+\lambdamax \bigg) + \frac{\sup_{y\in [0,K]}|f''(y)|}{2\etamin}  (\| \xi_{T-\Delta}\|^2_\infty+2(T-\Delta-t_i)K\lambdamax)\bigg] h.
\end{split}    
\end{equation*} 
\end{Corollary}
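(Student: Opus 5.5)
This is a direct application of Proposition~\ref{prop:error_imp_euler} on the shortened horizon $[0,T-\Delta]$ with terminal condition $\xi_{T-\Delta}$. The only work is to check its hypotheses and then transfer the constants.

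First, the hypotheses. Since $\Delta\in(0,T-\tau]$, the random variable $\xi_{T-\Delta}$ is $\mathcal F_{T-\Delta}$-measurable, and by \eqref{eq:a_priori_estim_term_cond} it is bounded. I also need it to be non-negative, as required in Section~\ref{subsec:imp_Euler_reg}. This is implicit in the setting, where one takes $\xi$ non-negative, for instance $\phi((T-t)/\eta_t)$; if necessary one replaces $\xi_{T-\Delta}$ by its positive part, which only decreases $|Y_{T-\Delta}-\xi_{T-\Delta}|$ because $Y\ge 0$.

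Conditions \ref{A1} and \ref{A3} are statements about the processes $\eta,\lambda$ and the function $f$. Their restrictions to $[0,T-\Delta]$ satisfy the same bounds $\etamin,\etamax,\lambdamax$. The discrete data $\ol a_i,\ol\lambda_i,\ol\xi$ are taken with the same bounds as before, now with $\ol\xi\le\|\xi_{T-\Delta}\|_\infty$ and the mesh $\pi$ equidistant on $[0,T-\Delta]$ with $h=(T-\Delta)/N$. Moreover, $(Y^\Delta,Z^\Delta)$ is by definition the solution of \eqref{eq:reg_BSDE} on $[0,T-\Delta]$ with terminal value $\xi_{T-\Delta}$. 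Lemma~\ref{lem:ex_uni_ub_yz} therefore applies with horizon $T-\Delta$: it gives existence and uniqueness in $\mathbb S^2(0,T-\Delta)$, the bound $0\le Y^\Delta\le\|\xi_{T-\Delta}\|_\infty+(T-\Delta)\lambdamax$, and the corresponding $Z$-bound.

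Second, I apply Proposition~\ref{prop:error_imp_euler} with $T$ replaced by $T-\Delta$. This yields the claimed inequality with $K'=\|\xi_{T-\Delta}\|_\infty+(T-\Delta)\lambdamax$ in place of $K$, with $T-\Delta$ in place of $T$ in the prefactor and the integration limits, and with $(T-\Delta-t_i)K'\lambdamax$ in the $Z$-term.

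The remaining step is to pass from $K'$, $T-\Delta$ to $K$, $T$, and this bookkeeping is the only place needing care. Since $K'\le K$, the following monotonicity facts hold on $[0,\infty)$ by \ref{A3}:
\begin{itemize}
\item $|f|=-f$ is non-decreasing, so $|f(K')|\le|f(K)|$;
\item $f$ is concave and non-increasing, so $f'\le 0$ is non-increasing and $|f'(K')|\le|f'(K)|$;
\item $\sup_{[0,K']}|f''|\le\sup_{[0,K]}|f''|$.
\end{itemize}
Also $T-\Delta\le T$, and the integrands $|a_s-\ol a_s|$ and $|\lambda_s-\ol\lambda_s|$ are non-negative. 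So the integrals over $[t_i,T-\Delta]$ are bounded by the integrals over $[t_i,T]$, interpreting $\ol a_s,\ol\lambda_s$ (and $a,\lambda$) as extended beyond $T-\Delta$; the stated bound is simply weaker. Likewise $(T-\Delta-t_i)K'\lambdamax\le(T-\Delta-t_i)K\lambdamax$.

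Every replaced quantity enters the bound with a non-negative coefficient, so the inequality of the corollary follows.
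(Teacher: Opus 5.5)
Your proposal is correct and follows the paper's route. The paper simply declares the corollary an immediate consequence of Proposition~\ref{prop:error_imp_euler} on the horizon $T-\Delta$, and your bookkeeping (enlarging $\|\xi_{T-\Delta}\|_\infty+(T-\Delta)\lambdamax$ to $K$ and $T-\Delta$ to $T$) makes explicit what the paper leaves implicit. One small caveat: replacing $\xi_{T-\Delta}$ by its positive part would change $Y^\Delta$ itself, so non-negativity of $\xi_{T-\Delta}$ should be taken as a standing assumption, which is how the paper treats it.
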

\begin{proof}
    This is an immediate consequence of Proposition \ref{prop:error_imp_euler}.
\end{proof}

From Corollary~\ref{coro:error_imp_euler} and \eqref{eq:estim_approx_BSDE}, we derive an estimate of the error for $Y$: For all $i\in \{0,\ldots,N\}$ we have a.s.\ that 
\begin{equation}\label{eq:error_estimate_Y}
\begin{split}
|Y_{t_i}-\ol Y_i|&\leq C_1 C \Delta^\alpha + 
\bE_{t_i} \bigg[|  \xi_{T-\Delta}- \ol \xi |+|f(K)|\int_{t_i}^T|a_s-\ol a_s|ds+\int_{t_i}^T|\lambda_s-\ol \lambda_s|ds \bigg] \\
&\quad +\bigg[\frac{|f'(K)|T}{2\etamin}\left(\frac{1}{\etamin}|f(K)|+\lambdamax\right)+\frac{\sup_{y\in [0,K]}|f''(y)|}{2\etamin}  ( K^2 + (T\lambdamax)^2)\bigg]h.
\end{split}
\end{equation} 
From \eqref{eq:a_priori_estim_term_cond}, the value of $K$ is bounded by 
$\Theta \left(\Delta \right) + C\Delta^\alpha+ T\lambdamax$ (other estimates are possible with \eqref{eq:a_priori_estim_term_cond_bis} or \eqref{eq:a_priori_estim_term_cond_ter} under stronger conditions). 

Now let us assume that 
\begin{Assumption}\label{ass:regularity_approx}
There exist two constants $C_2 > 0$ and $\beta > 0$ and a function $\Phi\colon (0,T] \to [0,\infty)$ such that for any $h>0$ and $0< \Delta\leq T-\tau$ and all $0\leq i\leq N$ it holds a.s. 
\begin{equation} \label{eq:conv_rate_Euler}
\bE_{t_i}\bigg[\int_{t_i}^T|a_s-\ol a_s|ds+\int_{t_i}^T|\lambda_s-\ol \lambda_s|ds \bigg] \leq C_2 h^{\beta}
\end{equation}
and 
$$\bE_{t_i}\big[| \xi_{T-\Delta}- \ol \xi |\big] \leq C_2 \Phi(\Delta) h^{\beta}.$$
\end{Assumption}
If $(1/\eta,\lambda)$ is the solution of an SDE with Lipschitz-continuous coefficients, it is well known that the Euler scheme gives such an approximation \eqref{eq:conv_rate_Euler} with $\beta=1/2$ (see \cite{kloe:plat:92}). And the constant $C_2$ depends on the bounds on the coefficients $\eta$ and $\lambda$. For the difference $\xi_{T-\Delta}-\ol \xi$, we need to take into account the norm of $\xi_{T-\Delta}$, which depends on $\Delta$; this is the reason for the presence of $\Phi(\Delta)$ in the estimate.

Hence we can now derive our main result for the approximation error of $Y$:
\begin{Theorem} \label{thm:error}
    Suppose that Assumptions \ref{ass:general_setting}, \ref{ass:approximation} and \ref{ass:regularity_approx} hold. Then there exist a constant $C_3>0$ and two functions $\Psi_1$ and $\Psi_2$, which only depend on $f$, $\eta$, $\lambda$, such that for any $h>0$ and $0< \Delta \leq T-\tau$ and for all $i\in \{0,\ldots,N\}$ it holds a.s. 
    \begin{equation*}
|Y_{t_i}-\ol Y_i|\leq  C_3 \big[ \Delta^\alpha + h^\beta + \Psi_1(\Delta) h^\beta + \Psi_2(\Delta) h\big].  
\end{equation*} 
\end{Theorem}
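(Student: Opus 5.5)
The plan is to combine the a.s. estimate \eqref{eq:error_estimate_Y} with Assumption~\ref{ass:regularity_approx} and the a priori bound \eqref{eq:a_priori_estim_term_cond}. We start from the triangle inequality $|Y_{t_i}-\ol Y_i|\le |Y_{t_i}-Y^\Delta_{t_i}|+|Y^\Delta_{t_i}-\ol Y_i|$.

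For the first term we use \eqref{eq:estim_approx_BSDE}. Since $\sup_r|Y_r-Y^\Delta_r|^2$ is controlled in conditional expectation, it gives $|Y_{t_i}-Y^\Delta_{t_i}|^2\le C_1C\Delta^{2\alpha}$ a.s. (taking $r=t_i$ inside the supremum, which is $\mathcal F_{t_i}$-measurable). Hence this term is at most $\sqrt{C_1C}\,\Delta^\alpha\le C_1C\Delta^\alpha$, as both constants are $\ge1$.

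The second term is bounded by Corollary~\ref{coro:error_imp_euler}, with $K=K(\Delta)=\|\xi_{T-\Delta}\|_\infty+T\lambdamax\le \Theta(\Delta)+C\Delta^\alpha+T\lambdamax=:\bar K(\Delta)$.
\begin{itemize}
\item Inside the conditional expectation, the term $\bE_{t_i}|\xi_{T-\Delta}-\ol\xi|$ is at most $C_2\Phi(\Delta)h^\beta$ by Assumption~\ref{ass:regularity_approx}.
\item The $\lambda$ term is at most $C_2h^\beta$ by \eqref{eq:conv_rate_Euler}.
\item The $a$ term is at most $|f(K)|C_2h^\beta$. Because $f$ is non-increasing and $f\le0$ on $[0,\infty)$, $|f(K)|\le|f(\bar K(\Delta))|$.
\end{itemize}
Integrals over $[t_i,T-\Delta]$ are dominated by those over $[t_i,T]$ after extending $\ol a,\ol\lambda$ suitably, or directly by the assumption with horizon $T-\Delta$.

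For the $h$ term, concavity of $f$ on $[0,\infty)$ makes $f'$ non-increasing and $\le0$, so $|f'(K)|\le|f'(\bar K(\Delta))|$. Also $\sup_{[0,K]}|f''|\le\sup_{[0,\bar K(\Delta)]}|f''|$, and $\|\xi_{T-\Delta}\|_\infty^2+2(T-\Delta-t_i)K\lambdamax\le K^2+(T\lambdamax)^2\le \bar K(\Delta)^2+(T\lambdamax)^2$, as in \eqref{eq:error_estimate_Y}.

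We then define
\begin{align*}
\Psi_1(\Delta)&=\Phi(\Delta)+|f(\bar K(\Delta))|,\\
\Psi_2(\Delta)&=\frac{|f'(\bar K(\Delta))|T}{2\etamin}\Big(\frac{|f(\bar K(\Delta))|}{\etamin}+\lambdamax\Big)+\frac{\sup_{[0,\bar K(\Delta)]}|f''|}{2\etamin}\big(\bar K(\Delta)^2+(T\lambdamax)^2\big),
\end{align*}
and $C_3=\max(C_1C,C_2,1)$. Collecting the pieces gives the claimed inequality. The functions $\Psi_1,\Psi_2$ depend only on $f$, the constants $\etamin,\etamax,\lambdamax,T$ (through $\Theta$ and $\bar K$), and the approximation data $C,\alpha,\Phi$. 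We read these as part of the data "$f,\eta,\lambda$" of the statement.

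The only delicate point is that every $\Delta$-dependence must be routed through the deterministic bound $\bar K(\Delta)$ rather than the random $\xi_{T-\Delta}$. This is where the monotonicity of $|f|$, $|f'|$ and of the running supremum of $|f''|$ is needed. We also need the first term, bounded via \eqref{eq:estim_approx_BSDE}, to yield a pointwise a.s. bound rather than one in $L^2$.
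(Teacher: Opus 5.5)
Your proposal is correct and follows the paper's proof. The paper simply inserts the bound $K\le\Theta(\Delta)+C\Delta^\alpha+T\lambdamax$ into \eqref{eq:error_estimate_Y} and takes exactly your $\Psi_1(\Delta)=|f(\bar K(\Delta))|+\Phi(\Delta)$ and your $\Psi_2$, while your use of \eqref{eq:estim_approx_BSDE} for the first term and of the monotonicity of $|f|$, $|f'|$ and $\sup_{[0,\cdot]}|f''|$ to replace $K$ by $\bar K(\Delta)$ spells out what the paper leaves implicit.
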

\begin{proof}
We use our previous result \eqref{eq:error_estimate_Y} with $K=\Theta \left(\Delta \right) + C\Delta^\alpha+ T\lambdamax$ and 
\begin{align*}
   \Psi_1(\Delta) & = |f(K)| + \Phi(\Delta),\\
  \Psi_2(\Delta) & = \frac{|f'(K)|T}{2\etamin}\left(\frac{1}{\etamin}|f(K)|+\lambdamax\right)+\frac{\sup_{y\in [0,K]}|f''(y)|}{2\etamin}  ( K^2 + (T\lambdamax)^2)  .
\end{align*}
This proves the claim. 
\end{proof}

Obviously $\Psi_1$ and $\Psi_2$ tend to infinity when $\Delta$ goes to zero. Hence there is a balance to find between $\Delta$ and $h$. 

\section{Expansion of $Y$ near $T$} \label{sect:approx}

We still assume that Conditions \ref{A1} to \ref{A4} hold. The goal is to find sufficient assumptions on $f$ such that \eqref{eq:cond_term} holds.

\subsection{Main result}\label{sec:mainresult}

Recall that $G$, the antiderivative of $1/f$, is positive and we consider $\phi\colon (0,\infty)\to (0,\infty)$ to be the inverse of $G$. Also evoke that $\phi$ solves the ODE $\phi' = f\circ \phi$ with $\lim_{x\to 0} \phi(x)=\infty$ and note that $\phi'$ is negative and $\phi''$ positive by Lemma~\ref{lem:ineq_phi} in the appendix. In this section, our aim is to show that 
a.s. 
 \begin{equation} \label{eq:asymp_beha_Y}
Y_t = \phi \left(\dfrac{T-t}{\eta_t} \right) - \phi'\left( \dfrac{T-t}{\eta_t} \right) H_t,\quad \forall t \in [0,T),
\end{equation}
where $H$ is the solution of a BSDE, obtained via a fixed-point argument, with a suitable behavior at the time $T$. 

Assume for a while that the relation~\eqref{eq:asymp_beha_Y} holds. 
Then with It\^o's formula we deduce
that on $[0,T)$ the process $H$ has the dynamics 
\begin{align} \nonumber 
dH_t & = \frac{1}{\eta_t \phi'\left( A_t \right) } \left[ f( \phi(A_t) - \phi'(A_t) H_t) - f(\phi(A_t)) + f'(\phi(A_t)) \phi'(A_t) H_t \right] dt \\ \nonumber 
& \quad + \left[ \frac{1}{\phi'\left( A_t \right) } \lambda_t  +A_t \left( - \dfrac{b^\eta_t }{\eta_t} + \left(  \dfrac{\sigma^\eta_t}{\eta_t} \right)^2 \left( 1 - \dfrac{\kappa^1_t }{2} \right) \right)\right] dt  - \mu_t  H_t dt - \kappa^1_t  \dfrac{\sigma^\eta_t }{\eta_t}  Z^H_t dt + Z^H_t dW_t   \\  \label{eq:BSDE_H}& 
=: - F^H(t,H_t,Z^H_t) dt + Z^H_t dW_t, 
\end{align}
where for all $x \in (0,\infty)$, $t \in [0,T)$ and $i\in\{0,1,2\}$ we have 
\begin{equation*}
    A_t = \dfrac{T-t}{\eta_t}, 
    \quad \kappa^i(x) = -\dfrac{\phi^{(i+1)}(x)}{\phi^{(i)}(x)} x
\end{equation*}
and 
\begin{equation} \label{eq:coeff_BSDE_H}
\kappa^i_t = \kappa^i(A_t)=- \dfrac{\phi^{(i+1)} \left(A_t \right)  }{\phi^{(i)}\left(A_t \right)}A_t, \quad \mu_t = \kappa^1_t \dfrac{ b^\eta_t}{\eta_t}  + \kappa^1_t  \left(\dfrac{\kappa^2_t}{2}-1\right) \left( \dfrac{\sigma^\eta_t}{ \eta_t }\right)^2 .
\end{equation}
We have the following relation between the functions $x\mapsto \kappa^i(x)$, which we prove in the appendix: 
\begin{Lemma} \label{lem:ineg_kappa_i_x}
Assume that Conditions \ref{A1} to \ref{A4} are satisfied. Then 
\begin{equation} \label{ineq:kappa_i}
\forall x > 0\colon \quad 0\leq \kappa^0(x) \leq  \kappa^1(x) \leq \kappa^2(x).
\end{equation}
\end{Lemma}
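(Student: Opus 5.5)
The plan is to rewrite each $\kappa^i$ in terms of $f$ evaluated along $\phi$, using only the ODE $\phi'=f\circ\phi$, and then read off the inequalities from the concavity and monotonicity in \ref{A3}. Throughout we write $y=\phi(x)\in(0,\infty)$.

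\textbf{Strict signs of $f$ and $f'$.} Since $G$ is finite and positive on $(0,\infty)$ by \ref{A4}, we have $-f>0$ on $(0,\infty)$, so $f(y)<0$ for $y>0$. Concavity of $f$ on $[0,\infty)$ together with $f(0)=0$ gives the tangent-line bound
\[
f(0)\le f(y)+f'(y)(0-y),\qquad\text{that is,}\qquad f'(y)\le \frac{f(y)}{y}<0 \quad\text{for } y>0 .
\]
Also $f''\le 0$ on $[0,\infty)$.

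\textbf{Derivatives of $\phi$.} Differentiating $\phi'=f(\phi)$ gives
\[
\phi''=f'(\phi)\,\phi'>0,\qquad \phi'''=f''(\phi)(\phi')^2+f'(\phi)\,\phi''.
\]
Hence
\[
\kappa^0(x)=-x\,\frac{f(y)}{y},\qquad \kappa^1(x)=-x\,f'(y),\qquad \kappa^2(x)=-x\,\frac{f''(y)\,f(y)}{f'(y)}-x\,f'(y).
\]
For $\kappa^2$ we used $\phi'''/\phi''=f''(\phi)\,\phi'/f'(\phi)+f'(\phi)$ and $\phi'=f(\phi)$.

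\textbf{The three inequalities.}
\begin{itemize}
\item $\kappa^0\ge 0$: this holds because $f(y)<0$ and $x,y>0$.
\item $\kappa^0\le\kappa^1$: after multiplying by $-1/x$, this is the tangent-line inequality $f'(y)\le f(y)/y$ from the first step.
\item $\kappa^1\le\kappa^2$: we have $\kappa^2-\kappa^1=-x\,f''(y)f(y)/f'(y)$. Here $f''(y)\le 0$ and $f(y)<0$, so $f''(y)f(y)\ge 0$; dividing by $f'(y)<0$ gives a nonpositive number, and the leading minus sign makes the difference nonnegative.
\end{itemize}

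\textbf{Main obstacle.} The only delicate point is ensuring that no division by zero occurs. We need $f'(\phi(x))\neq 0$, so that $\phi''>0$ and $\kappa^2$ is well defined, and we need $f(\phi(x))\neq 0$. Both follow from the strict inequality $f'(y)\le f(y)/y<0$, which uses \ref{A4} together with the concavity in \ref{A3}. Once these strict signs are established, the rest is the sign bookkeeping above.
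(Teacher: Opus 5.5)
Your proof is correct and is essentially the paper's argument. The paper also reduces $\kappa^0\le\kappa^1$ to the concavity inequality $yf'(y)\le f(y)$ (via monotonicity of $\phi'/\phi$ and the substitution $y=\phi(x)$), and $\kappa^1\le\kappa^2$ to $\phi'''\phi'-(\phi'')^2=f''(\phi)(\phi')^3\ge 0$, which is exactly your identity $\kappa^2-\kappa^1=-x f''(y)f(y)/f'(y)$. The differences are cosmetic: you write everything in terms of $f$ at $y=\phi(x)$, and you obtain $f'<0$ on $(0,\infty)$ directly from $f'(y)\le f(y)/y<0$ rather than by the contradiction argument of Lemma~\ref{lem:ineq_phi}.
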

We further suppose that:
\begin{enumerate}[label={\rm \textbf{(A\arabic*)}}]
\setcounter{enumi}{4}
\item \label{A5} {\it There exists $\epsilon > 0$ such that the function $(0,\infty) \ni x\mapsto \kappa^2(x)$ is bounded on the interval $(0,\epsilon)$.
}
\end{enumerate}
We claim that:
\begin{Lemma} \label{lem:ineg_kappa_i}
Assume that Conditions \ref{A1} to \ref{A5} are satisfied. Then 
the processes $\kappa^0$, $\kappa^1$, $\kappa^2$ and $\mu$ defined by \eqref{eq:coeff_BSDE_H} 
are bounded on $[0,T)\times \Omega$.
\end{Lemma}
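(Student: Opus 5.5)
The plan is to reduce everything to the boundedness of the deterministic functions $\kappa^i$ on the range of $A_t$, and then to use \ref{A1}--\ref{A2} for the remaining factors in $\mu$.

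\emph{Range of $A_t$.} By \ref{A1}, for $t\in[0,T)$ we have
\[
A_t=\frac{T-t}{\eta_t}\in\Big(0,\frac{T}{\etamin}\Big],
\]
so $\kappa^i_t=\kappa^i(A_t)$ takes values in $\kappa^i\big((0,T/\etamin]\big)$. It therefore suffices to show that each function $x\mapsto\kappa^i(x)$, $i=0,1,2$, is bounded on $(0,M]$ with $M=T/\etamin$.

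\emph{Bounding $\kappa^2$.} By Lemma~\ref{lem:ineg_kappa_i_x}, $0\le\kappa^0\le\kappa^1\le\kappa^2$ on $(0,\infty)$, so it is enough to bound $\kappa^2$ on $(0,M]$. Split this interval into $(0,\epsilon)$ and $[\epsilon,M]$.
\begin{itemize}
\item On $(0,\epsilon)$, boundedness is exactly assumption \ref{A5}.
\item On $[\epsilon,M]$ (if nonempty), use continuity. Since $f$ is $C^2$ on $[0,\infty)$ and $\phi'=f\circ\phi$, the function $\phi$ is $C^3$ on $(0,\infty)$, with
\[
\phi''=f'(\phi)\phi',\qquad \phi'''=f''(\phi)(\phi')^2+f'(\phi)\phi''.
\]
Moreover, by Lemma~\ref{lem:ineq_phi}, $\phi''>0$ on $(0,\infty)$, so $\kappa^2(x)=-x\,\phi'''(x)/\phi''(x)$ is continuous on $(0,\infty)$. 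Hence it is bounded on the compact set $[\epsilon,M]$.
\end{itemize}
The same continuity argument applies to $\kappa^0$ and $\kappa^1$, since $\phi>0$ and $\phi'<0$ never vanish. In any case the sandwich from Lemma~\ref{lem:ineg_kappa_i_x} already gives $0\le\kappa^0,\kappa^1\le\sup_{(0,M]}\kappa^2<\infty$.

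\emph{Bounding $\mu$.} We have
\[
\mu_t=\kappa^1_t\,\frac{b^\eta_t}{\eta_t}+\kappa^1_t\Big(\frac{\kappa^2_t}{2}-1\Big)\Big(\frac{\sigma^\eta_t}{\eta_t}\Big)^2 .
\]
By \ref{A2}, $b^\eta$ and $\sigma^\eta$ are essentially bounded, and by \ref{A1}, $1/\eta_t\le 1/\etamin$. Together with the bounds on $\kappa^1,\kappa^2$ just obtained, this gives
\[
|\mu_t|\le \bar\kappa\,\frac{\|b^\eta\|_\infty}{\etamin}+\bar\kappa\Big(\frac{\bar\kappa}{2}+1\Big)\frac{\|\sigma^\eta\|_\infty^2}{\etamin^2},
\qquad \bar\kappa:=\sup_{(0,M]}\kappa^2,
\]
uniformly on $[0,T)\times\Omega$, up to a null set.

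\emph{Main obstacle.} The only delicate point is the behaviour of $\kappa^2$ near $x=0$, where $\phi$ blows up. This is precisely what \ref{A5} provides. The rest is the observation that $A_t$ stays in a bounded interval away from nothing else but $0$, together with the continuity and nonvanishing of $\phi''$ from Lemma~\ref{lem:ineq_phi}.
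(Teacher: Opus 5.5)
Your proof is correct and follows essentially the same route as the paper. Condition \ref{A5} handles $\kappa^2$ near $0$, and continuity of $\kappa^2=-x\phi'''/\phi''$ (using $\phi\in C^3$ and $\phi''>0$ from Lemma~\ref{lem:ineq_phi}) handles a compact range away from $0$; Lemma~\ref{lem:ineg_kappa_i_x} then transfers the bound to $\kappa^0,\kappa^1$, and \ref{A1}--\ref{A2} give the bound on $\mu$. The only cosmetic difference is that you split the range of $A_t$ into $(0,\epsilon)$ and $[\epsilon,T/\etamin]$, whereas the paper splits the time interval at $T-\etamin\epsilon/2$ and bounds $A_t$ from below on the earlier part.
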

The proof is set up in the appendix. 
Denote by $\kappa^\star$ and $\mu^\star$ the upper bound of $\kappa^1$ and $|\mu|$, respectively.
It then follows that 
the process 
$$\zeta_t = \exp \left(\int_0^t \mu_s ds \right), \quad t\in [0,T],$$
is bounded on $[0,T]\times \Omega$ with the upper bound $\zeta^\star=\exp(T \mu^\star)$
and the lower bound 
$\exp(-T\mu^\star) = \zeta_\star > 0$.
The next quantity is crucial in the rest of this part:
\begin{equation} \label{eq:def_const_K}
K =   \zeta^\star \etamin \left[ \lambda^\star +\dfrac{1}{2} \left\| \dfrac{b^\eta}{\eta} \right\|_\infty + \dfrac{1}{2} \left(   \dfrac{\kappa^\star}{2} + 1 \right) \left\| \dfrac{\sigma^\eta}{\eta} \right\|_\infty^2 \right].
\end{equation}
We define for $x\geq 0$ the quantities
\begin{equation} \label{eq:rate_conv}
\varpi(x) = \int_0^{x}  \frac{ 1 }{-\phi'\left( z \right) } dz,\quad  \vartheta (x) = \max\big(  \varpi(x), x^2 \big) .
\end{equation}
The function $\vartheta$ gives the rate of convergence of $t \mapsto H_t$ to zero when $t$ goes to $T$. 
Note that since $\phi'$ is a 
negative and increasing function (cf.\ Lemma~\ref{lem:ineq_phi}) it holds for all $x>0$ that 
$0\leq \varpi(x) \leq  \dfrac{ x }{-\phi'(x) }.$ 
We thus have for all $x>0$ that 
\begin{equation} \label{eq:estim_vartheta}
0\leq \vartheta(x) \leq x  \max \bigg( \dfrac{ 1 }{-\phi'\left(x\right) } , x \bigg).
\end{equation}
Since $\phi' = f \circ \phi$ with $\lim_{x\to 0} \phi(x) = +\infty$, we have $\displaystyle  \lim_{x\to 0} \vartheta(x) = 0.$
Some properties of $\vartheta$ are discussed in Section~\ref{ssect:vartheta}. 
Before presenting our last condition, we note the following technical result, with the proof given in the appendix.
\begin{Lemma}\label{lem:lastcondwelldef}
    Assume that Conditions \ref{A3} to \ref{A5} are satisfied. Let $\eta^\sharp \ge 1$ and $\varsigma>0$. Then 
    there exists $\widetilde \epsilon > 0$ such that 
    $$\forall x \in (0,\widetilde \epsilon)\colon \quad \phi(x) + \varsigma \phi'(x) \vartheta(\eta^\sharp x) > 0.$$
\end{Lemma}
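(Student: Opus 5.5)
We must show $\phi(x) + \varsigma\phi'(x)\vartheta(\eta^\sharp x)>0$ for small $x$. Since $\phi>0$ and $\phi'<0$ (Lemma~\ref{lem:ineq_phi}), this is equivalent to
\[
\varsigma\,\frac{-\phi'(x)}{\phi(x)}\,\vartheta(\eta^\sharp x) < 1 .
\]
In terms of $\kappa^0$, we have $-\phi'(x)/\phi(x)=\kappa^0(x)/x$. Hence it suffices to show that
\[
\frac{\kappa^0(x)}{x}\,\vartheta(\eta^\sharp x)\to 0 \quad \text{as } x\to 0 .
\]
By Lemma~\ref{lem:ineg_kappa_i_x} and \ref{A5}, $\kappa^0\le\kappa^1\le\kappa^2\le M$ on $(0,\epsilon)$ for some constant $M$. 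It therefore remains to show $\vartheta(\eta^\sharp x)/x\to 0$.

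Set $y=\eta^\sharp x$. By \eqref{eq:estim_vartheta},
\[
\frac{\vartheta(y)}{x}\le \eta^\sharp \max\Big(\frac{1}{-\phi'(y)},\, y\Big).
\]
The term $\eta^\sharp y$ clearly tends to $0$. For the other term, $-\phi'(y)=-f(\phi(y))$, and $\phi(y)\to\infty$ as $y\to0$. Since $f$ is concave, non-increasing and satisfies $f(0)=0$, we have $-f(z)\to\infty$ as $z\to\infty$. This is exactly where the main point lies: we must rule out that $-f$ stays bounded. If $-f$ were bounded, then $\int^\infty dz/(-f(z))=\infty$, contradicting \ref{A4}. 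Note also that $f$ is not identically $0$ near infinity, because $G$ is finite. More precisely, concavity gives $f(z)\le f(z_0)+f'(z_0)(z-z_0)$ with $f'(z_0)<0$ for some $z_0$. Such a $z_0$ exists, since otherwise $f\equiv0$ on $[0,z_0]$ for every $z_0$, which is again impossible by \ref{A4}. So $-f$ grows at least linearly, and hence $1/(-\phi'(y))\to 0$.

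Combining the two steps, $\varsigma\,\kappa^0(x)\,\vartheta(\eta^\sharp x)/x\le \varsigma M\eta^\sharp\max\big(1/(-\phi'(\eta^\sharp x)),\,\eta^\sharp x\big)\to0$. Choose $\widetilde\epsilon\le\epsilon$ such that this quantity is $<1$ on $(0,\widetilde\epsilon)$. We also require $\eta^\sharp\widetilde\epsilon$ to be small enough that the bounds used hold; \eqref{eq:estim_vartheta} is valid for all $x>0$, so only the bound $\kappa^0\le M$ requires $x<\epsilon$. Multiplying back by $\phi(x)>0$ gives the claim.
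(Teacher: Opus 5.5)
Your proof is correct. It differs from the paper's proof in how the $\varpi$-branch of \eqref{eq:estim_vartheta} is controlled.

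\textbf{The paper's route.} The paper multiplies \eqref{eq:estim_vartheta} by $-\phi'(x)$ to get $-\phi'(x)\vartheta(\eta^\sharp x)\le \eta^\sharp x\max\bigl(\frac{-\phi'(x)}{-\phi'(\eta^\sharp x)},\,\eta^\sharp\kappa^0(x)\phi(x)\bigr)$. The first ratio is bounded by $(\eta^\sharp)^k$, using the imported comparison $-\phi'(x)\le(\eta^\sharp)^k(-\phi'(\eta^\sharp x))$ from \cite[Lemma 11 and Equation (38)]{grae:popi:21}. That comparison relies on the bound on $\kappa^1$ coming from \ref{A5}, and it leaves an additive $O(x)$ term. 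The second term is absorbed into $\phi(x)/2$ via the boundedness of $\kappa^0$. The conclusion then follows from $\phi(x)\to\infty$.

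\textbf{Your route.} You divide by $\phi(x)$ first, so the corresponding term becomes $\eta^\sharp\kappa^0(x)/(-\phi'(\eta^\sharp x))$. You dispose of it with the elementary fact that $-\phi'(y)=-f(\phi(y))\to\infty$ as $y\to 0$. This needs only the monotonicity of $f$ and \ref{A4}: $-f$ is non-decreasing, and a finite limit would make $G$ infinite.

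\textbf{What each buys.} Your argument is self-contained. It uses \ref{A5} only through the bound on $\kappa^0$ and avoids the comparison of $\phi'$ at $x$ and $\eta^\sharp x$ altogether. It also shows directly that $\varsigma(-\phi'(x))\vartheta(\eta^\sharp x)=o(\phi(x))$. The paper's argument, in exchange, yields an explicit additive lower bound of the form $\phi(x)/2-\varsigma(\eta^\sharp)^{k+1}x$ once $\varsigma(\eta^\sharp)^2x\kappa^0(x)\le 1/2$. The price is the external lemma.
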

Remark that $\phi(x) - \varsigma \phi'(x) \vartheta(\eta^\sharp x) > 0$ in the setting of Lemma~\ref{lem:lastcondwelldef}  because $-\phi'$ is non-negative.
Let us now formulate our last assumption. 
\begin{enumerate}[label={\rm \textbf{(A\arabic*)}}]
\setcounter{enumi}{5}
\item \label{A6} For some $\varsigma > 2K / \zeta_\star$, with $\eta^\sharp = \etamax / \etamin$, there exists $\widehat \epsilon > 0$ such that  the non-negative function
$$x \mapsto   \big| f'\big(\phi(x) \pm\varsigma \phi'(x) \vartheta( \eta^\sharp x)\big) - f'(\phi(x)) \big|$$ 
is bounded on $(0,\widehat \epsilon)$ by some non-increasing function $\Psi$ such that $\Psi$ is integrable on $(0,\widehat \epsilon)$ and $\vartheta( \eta^\sharp \cdot) \Psi$ is bounded on $(0,\widehat \epsilon)$. 
\end{enumerate}
Let us emphasize that under our setting, using the previous Lemma~\ref{lem:lastcondwelldef}, the terms inside $f'$ in this condition are positive and thus everything is well-defined when $\widehat \epsilon \leq \widetilde \epsilon$, with $\widetilde \epsilon$ being given in Lemma \ref{lem:lastcondwelldef}. Hence in the rest of this section, with an abuse of notation, we define $\epsilon>0$ as the minimum between the coefficient $\epsilon$ given in \ref{A5} and the coefficient $\widehat \epsilon$ given in \ref{A6}.  
And the properties of \ref{A5} and of \ref{A6} hold on this smaller interval $(0,\epsilon)$.

\begin{Remark}[On Conditions \ref{A5} and \ref{A6}]
Note that Conditions {\rm \ref{A5}} and {\rm \ref{A6}} just depend on the behavior of $f$ and $f'$ on a neighborhood of $\infty$.  
To see this for Condition~{\rm \ref{A5}}, we recall some results from \cite[Lemmata 11 and 12]{grae:popi:21}. 
First, boundedness of $\kappa^1$ is equivalent to the existence of two constants $\delta > 0$ and $R > 0$ with $G(R) = \epsilon$ such that $x \mapsto \displaystyle  \left( \int_x^\infty \dfrac{1}{-f(y)} dy \right)^{-\delta}$ is convex on $[R,+\infty)$. 
Second, additional boundedness of $\kappa^2$ is equivalent to: $(-f)\circ \psi^\delta$ is also increasing and concave on $[R,+\infty)$, where $\psi^\delta$ is the increasing and concave function $\psi^\delta \colon (0,\infty) \to (0,\infty)$, $ x \mapsto \phi (x^{-1/\delta})$. 
In a similar way, we can consider $y \mapsto | f'[y \pm \varsigma  f(y) \vartheta(\eta^\sharp G(y))] - f'(y) |$, bounded by $\Psi\circ G$ integrable on some interval $[R,+\infty)$, to see that Condition {\rm \ref{A6}} only depends on the behavior of $f'$ on a neighborhood of $\infty$. 
%


\end{Remark}
In the following lemma we present a sufficient condition for \ref{A6}, with the proof given in the appendix.
\begin{Lemma} \label{lem:checking_A6}
Assume that Conditions \ref{A3} to \ref{A5} are satisfied and that there exists $\widetilde \epsilon >0$ such that $\vartheta(x)=x^2$ for all $x \in [0,\widetilde\epsilon)$. 
Furthermore, suppose that there exist $R>0$ and $\varsigma >\frac{2K}{\zeta_*} (\eta^\sharp)^2$ such that the functions $y \mapsto  \dfrac{f^{(2)}[y \pm \varsigma f(y) (G(y))^{2}]}{f^{(2)}(y)}$ are bounded on $(R,+\infty)$. 
Then {\rm \ref{A6}} holds with $\Psi$ a constant function.  
\end{Lemma}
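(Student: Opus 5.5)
Under the hypotheses of the lemma, near $0$ we have $\vartheta(\eta^\sharp x)=(\eta^\sharp x)^2$. The plan is to express the difference of $f'$ values through $f''$, bound it by a multiple of $f''$ at a reference point, and then check that the resulting bound is bounded near $0$. A bounded function can be dominated by a constant $\Psi$, which is non-increasing and integrable on a bounded interval. The product $\vartheta(\eta^\sharp\cdot)\Psi$ is then also bounded, since $\vartheta(\eta^\sharp x)\to 0$.

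\textbf{Step 1 (change of variables).} Set $y=\phi(x)$, so $x=G(y)$, $\phi'(x)=f(y)$, and $x\to 0$ corresponds to $y\to\infty$. Restrict to $x<\min(\widetilde\epsilon/\eta^\sharp,\,G(R))$, which gives $y>R$. The argument of $f'$ in \ref{A6} then becomes
$$
y\pm \varsigma' f(y)\,G(y)^2, \qquad \varsigma' := \varsigma_{\ref{A6}}(\eta^\sharp)^2 .
$$
For \ref{A6} I will use $\varsigma_{\ref{A6}}:=\varsigma/(\eta^\sharp)^2$, where $\varsigma$ is the constant in the lemma. The assumption $\varsigma>\frac{2K}{\zeta_\star}(\eta^\sharp)^2$ gives exactly $\varsigma_{\ref{A6}}>2K/\zeta_\star$, as \ref{A6} requires. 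With this choice $\varsigma'=\varsigma$, matching the hypothesis. The task is therefore to show that
$$
y\mapsto \big|f'(y\pm\varsigma f(y)G(y)^2)-f'(y)\big|
$$
is bounded on $(R,\infty)$, possibly after enlarging $R$.

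\textbf{Step 2 (mean value theorem).} Note that $f<0$ on $(0,\infty)$. Indeed, $f(y)=0$ for some $y>0$ would force $f\equiv0$ on $[0,y]$, and then concavity and monotonicity would give a contradiction with $G<\infty$; I will take this as routine. Both arguments are positive by Lemma~\ref{lem:lastcondwelldef}. The mean value theorem gives
$$
|f'(y\pm\varsigma f(y)G(y)^2)-f'(y)| \le \varsigma |f(y)|G(y)^2 \sup_{z\in I_y}|f''(z)|,
$$
where $I_y$ is the interval between $y$ and $y\mp\varsigma f(y)G(y)^2$. Since $f''\le0$ on $[0,\infty)$ by concavity, the main obstacle is controlling $\sup_{I_y}|f''|$ by $|f''(y)|$.

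\textbf{Step 3 (controlling $f''$ on $I_y$).} I would first try to use that $-f''$ behaves monotonically near infinity, for instance via \ref{A5}, which controls $\phi'''/\phi''$. If this works, $\sup_{I_y}|f''|$ is attained at an endpoint. The endpoint $y$ is trivial, and at the other endpoint the hypothesis gives $|f''(y\pm\varsigma f G^2)|\le M|f''(y)|$. Without monotonicity, I would instead apply the hypothesis to every scaled endpoint $y\pm s f(y)G(y)^2$ with $s\in[0,\varsigma]$; this is what the lemma's "functions" (plural) suggests may be intended. I would admit the monotone case as the route.

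\textbf{Step 4 (conclusion).} It then remains to show that $|f(y)|\,|f''(y)|\,G(y)^2$ is bounded. In the $x$ variable, $f(y)=\phi'(x)$ and $f''(y)f(y)^2=\phi'''(x)-f'(y)\phi''(x)\cdot(\ldots)$. More directly, $\phi''=f'(\phi)\phi'$ and $\phi'''=f''(\phi)\phi'^2+f'(\phi)\phi''$, so
$$
|f''(y)f(y)|x^2=\frac{|\phi'''-\phi''^2/\phi'|\,x^2}{|\phi'|}.
$$
This is bounded by $\kappa^1\kappa^2+(\kappa^1)^2$, since $\phi'''x^2/|\phi'|=\kappa^1\kappa^2$ and $(\phi'')^2x^2/\phi'^2=(\kappa^1)^2$. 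By Lemma~\ref{lem:ineg_kappa_i_x} and \ref{A5}, both terms are bounded near $0$. Hence the quantity in \ref{A6} is bounded on $(0,\widehat\epsilon)$, and a constant $\Psi$ satisfies all the requirements.
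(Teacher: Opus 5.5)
Your route is the paper's. You substitute $y=\phi(x)$, express the difference of $f'$ through $f''$ along the segment, compare with $f''(y)$ via the ratio hypothesis, and bound $|f''(y)f(y)|G(y)^2$ by the $\kappa$'s; the paper obtains exactly $\kappa^1\kappa^2-(\kappa^1)^2$. Your choice $\varsigma_{\mathrm{A6}}=\varsigma/(\eta^\sharp)^2$ is correct, and it is more careful than the paper, which writes $\vartheta(x)$ where \ref{A6} has $\vartheta(\eta^\sharp x)$.

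\textbf{The gap is Step 3.} \ref{A5} does not make $-f''$ monotone near $\infty$. In the $y$ variable, \ref{A5} says that $\kappa^2=G(y)\big(|f''(y)||f(y)|/|f'(y)|+|f'(y)|\big)$ is bounded, which is only a pointwise upper bound on $|f''|$. For instance, $f(y)=\sin y-y-y^2$ on $[0,\infty)$ satisfies \ref{A3}--\ref{A5}, has $\vartheta(x)=x^2$ near $0$, and satisfies the ratio hypothesis, yet $-f''=2+\sin y$ oscillates. So ``admitting the monotone case'' is an unproved assumption. The hypothesis as stated controls the ratio only at the far endpoint; the plural ``functions'' refers to the two signs, not to a family in $s$.

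The paper's proof passes over the same point. Its factor $\int_0^1 f^{(2)}(\phi(x)\pm\alpha\varsigma\phi'(x)x^2)/f^{(2)}(\phi(x))\,d\alpha$ tacitly uses the ratio bound uniformly in $\alpha\in[0,1]$, which is your second reading.

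You can close the gap by applying your Step 4 identity at every $z\in I_y$ rather than only at $y$. Let $k=\sup_{(0,\epsilon)}\kappa^2$ and $\ell=\varsigma|f(y)|G(y)^2$.
\begin{itemize}
\item For every $z$ with $G(z)<\epsilon$, $|f''(z)||f(z)|G(z)^2=\big(\kappa^1(\kappa^2-\kappa^1)\big)(G(z))\le k^2$.
\item Monotonicity of $G$, $|f|$ and $|f'|$ gives, on $I_y$, $G(z)\ge G(y)-\ell/|f(y)|=G(y)(1-\varsigma G(y))$.
\item Likewise $|f(z)|\ge|f(y)|-\ell|f'(y)|\ge|f(y)|(1-\varsigma kG(y))$, using $|f'(y)|G(y)=\kappa^1\le k$.
\item Hence $|f(z)|G(z)^2\ge\tfrac14|f(y)|G(y)^2$ for $y$ large, and so $\ell\sup_{I_y}|f''|\le 4\varsigma k^2$.
\item $I_y\subset(\phi(\epsilon),\infty)$ for large $y$, because $\ell=\varsigma\,\kappa^0(G(y))\,yG(y)\le\varsigma k\,yG(y)$ gives $y-\ell\ge y/2$.
\end{itemize}
This gives a constant $\Psi$ with no monotonicity assumption, and in fact without using the ratio hypothesis at all.
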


Define 
$$\theta(x) = \vartheta \bigg( \dfrac{x}{\etamin} \bigg), \quad x \ge 0,$$ 
and consider for any $\delta \in (0,T]$ the space  
\begin{equation} \label{eq:def_cH_delta} 
\mathcal H^{\delta,\theta}:=\big\{H\in L^\infty(\Omega;C([T-\delta, T];\mathbb R)),\ H \mbox{ adapted}:\|H\|_{\mathcal H^{\delta,\theta}}<+\infty \big\}
\end{equation}
 endowed with the weighted norm 
\begin{equation} \label{eq:def_norm_cH_delta}
\|H\|_{\mathcal H^{\delta,\theta}}=\left\|\sup_{t\in{[T-\delta,T)}} \dfrac{|H_t|}{\theta(T-t)} \right\|_\infty.
\end{equation}
Our main result is the following. 

\begin{Theorem} \label{thm:gene_case}
Assume that Conditions {\rm \ref{A1}} to {\rm \ref{A6}} are satisfied. 
Then there exists $\delta > 0$ such that the BSDE~\eqref{eq:BSDE_H} with the terminal condition $0$ has a unique solution $(H,Z^{H})$ on $[0,T]$ with $H \in \mathcal H^{\delta,\theta} \cap L^\infty([0,T] \times \Omega)$,  
and $\int_{0}^\cdot Z^{H} dW$ is a BMO-martingale. Moreover, the relation~\eqref{eq:asymp_beha_Y} holds. 
\end{Theorem}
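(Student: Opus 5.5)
We construct the solution near $T$ by a fixed-point argument in $\mathcal H^{\delta,\theta}$, extend it to $[0,T]$ via the known solution $Y$, and then check uniqueness and the representation~\eqref{eq:asymp_beha_Y}.

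\textbf{Step 1: remove the linear terms.} On $[T-\delta,T]$ we multiply $H$ by $\zeta$ to remove the term $-\mu_t H_t\,dt$. We also remove $-\kappa^1_t\frac{\sigma^\eta_t}{\eta_t}Z^H_t\,dt$ by a Girsanov change of measure $\mathbb Q$. This is legitimate because $\kappa^1\sigma^\eta/\eta$ is bounded by Lemma~\ref{lem:ineg_kappa_i}. With terminal condition $0$, the BSDE~\eqref{eq:BSDE_H} then becomes the fixed-point equation
$$H_t=\zeta_t^{-1}\,\bE^{\mathbb Q}_t\bigg[\int_t^T \zeta_s\Big(\tfrac{1}{\eta_s\phi'(A_s)}\,\mathcal R(s,H_s)+\tfrac{\lambda_s}{\phi'(A_s)}+A_s\,\gamma_s\Big)ds\bigg]=:\Gamma(H)_t .$$
Here $\mathcal R(s,h)=f(\phi(A_s)-\phi'(A_s)h)-f(\phi(A_s))+f'(\phi(A_s))\phi'(A_s)h$ is the second-order remainder, and $\gamma$ is the bounded coefficient in front of $A_t$.

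\textbf{Step 2: $\Gamma$ maps a ball of $\mathcal H^{\delta,\theta}$ into itself.} The source terms give at most
$$\zeta^\star\Big(\lambda^\star\int_t^T\tfrac{ds}{-\phi'(A_s)}+\|\gamma\|_\infty\int_t^T A_s\,ds\Big).$$
Since $A_s\le (T-s)/\etamin$ and $-\phi'$ is decreasing (Lemma~\ref{lem:ineq_phi}), this is bounded by $K\,\vartheta((T-t)/\etamin)=K\theta(T-t)$ up to the factor $\zeta_\star^{-1}$. This is exactly how the constant $K$ in~\eqref{eq:def_const_K} was designed: $\varpi$ absorbs the $\lambda$-term and the $x^2$ part of $\vartheta$ absorbs the $A$-term.

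For the nonlinear term, take $H$ in the ball of radius $\varsigma/\eta^\sharp$-type, i.e. $|H_s|\le \varsigma\,\vartheta(\eta^\sharp A_s)$, using $\theta(T-s)\le\vartheta(\eta^\sharp A_s)$. By the mean value theorem,
$$|\mathcal R(s,H_s)|\le |\phi'(A_s)|\,|H_s|\,\sup_\pm\big|f'(\phi(A_s)\pm\varsigma\phi'(A_s)\vartheta(\eta^\sharp A_s))-f'(\phi(A_s))\big|.$$
Lemma~\ref{lem:lastcondwelldef} guarantees the arguments are positive, and the concavity of $f$ from \ref{A3} makes $f'$ monotone, so the supremum is attained at the endpoints. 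Condition~\ref{A6} then bounds this by $|\phi'(A_s)|\,|H_s|\,\Psi(A_s)$. Dividing by $|\eta_s\phi'(A_s)|$, the contribution is at most $C\,\|H\|\int_t^T\theta(T-s)\Psi(A_s)\,ds$. Because $\Psi$ is integrable and nonincreasing and $\theta$ is increasing, this is $\le \|H\|\,\theta(T-t)\,o(1)$ as $\delta\to0$.

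The condition $\varsigma>2K/\zeta_\star$ leaves room: source terms use at most half of the radius, and the nonlinear term uses the rest for small $\delta$.

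\textbf{Step 3: contraction.} Using that $\mathcal R(s,h)-\mathcal R(s,h')$ is controlled by $|\phi'|\,|h-h'|\,\Psi(A_s)$ on the ball, we get
$$\|\Gamma H-\Gamma H'\|\le o_\delta(1)\,\|H-H'\|,$$
so $\Gamma$ is a contraction for $\delta$ small. \textbf{This step is the main obstacle:} the weighted estimates must combine monotonicity of $\theta$ with the integrability of $\Psi$, and I expect the bound $\int_t^T\Psi(A_s)\,ds\to 0$ to carry it. Banach's theorem gives a unique $H\in\mathcal H^{\delta,\theta}$ in the ball, and $Z^H$ comes from martingale representation. The BMO property of $\int_0^\cdot Z^H\,dW$ follows from It\^o's formula applied to $H^2$, since both $H$ and the generator along $H$ are bounded.

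\textbf{Step 4: extension, representation and uniqueness.} On $[0,T-\delta]$ we define $H$ through~\eqref{eq:asymp_beha_Y}, i.e. $H=(\phi(A)-Y)/\phi'(A)$. This is bounded because $Y$ is bounded there by~\eqref{eq:boundofYintermsofTheta} and $\phi'(A)$ is bounded away from $0$. Conversely, set $\widehat Y=\phi(A)-\phi'(A)H$ on $[T-\delta,T)$. Reversing the It\^o computation leading to~\eqref{eq:BSDE_H}, $\widehat Y$ solves~\eqref{eq:sing_BSDE}. Moreover $\widehat Y\ge0$ near $T$ by Lemma~\ref{lem:lastcondwelldef}, and $\widehat Y\to\infty$ because $\vartheta\to0$.

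Uniqueness of nonnegative solutions of~\eqref{eq:sing_BSDE} (\cite[Proposition 4]{grae:popi:21}) gives $\widehat Y=Y$, which yields~\eqref{eq:asymp_beha_Y} and continuity at $T-\delta$. Uniqueness of $H$ in the stated class follows the same way: any such solution produces a nonnegative solution of~\eqref{eq:sing_BSDE}, hence equals the $H$ constructed above.
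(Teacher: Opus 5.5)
Your Steps 1--3 reproduce the paper's Steps 1--5. These are the same $\zeta$/Girsanov reduction, the same bound $K\vartheta((T-t)/\etamin)$ on the source term (which is what \eqref{eq:def_const_K} is built for), the same local Lipschitz bound via concavity and \ref{A6}, and the same contraction from $\eta^\sharp\int_0^{(T-t)/\etamax}\Psi(u)\,du\le\frac12$. Keeping $\zeta$ inside $\Gamma$ and working on the ball of radius $2K/\zeta_\star$ is just equivalent bookkeeping.

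The real difference is your Step 4. The paper extends $H$ below $T-\delta$ by solving the bounded monotone BSDE from $H_{T-\delta}$, and then proves $\widehat Y=Y$ by hand:
\begin{itemize}
\item $Y\le\widehat Y$ follows from minimality of $Y$, by comparison with the truncated solutions once $\widehat Y\to\infty$ is known.
\item For $\widehat Y\le Y$, the paper writes $Y=\phi(A)-\phi'(A)\mathcal H$ and uses $Y_t\ge\phi(\bE_t[\int_t^T\eta_s^{-1}ds])$ to get $\mathcal H_t\ge -K(T-t)^2/\etamin^2$. It then linearizes $H-\mathcal H$ using concavity of $f$ and \ref{A6}, which gives a linear BSDE inequality with integrable coefficient, whence $H\le\mathcal H$.
\end{itemize}
The paper explicitly remarks that the comparison principle of \cite[Proposition 20]{grae:popi:21} cannot be used here because $H$ may be negative.

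You instead delegate the identification, and the uniqueness of $H$, to the uniqueness of non-negative solutions of \eqref{eq:sing_BSDE} quoted in the introduction. You also define $H$ on $[0,T-\delta]$ directly from $Y$. Granting that uniqueness statement exactly as quoted, this is a valid and much shorter route. The paper's argument buys independence from the scope of that black box, since it uses only minimality of $Y$ and the lower bound of \cite[Lemma 6]{grae:popi:21}, and it yields uniqueness of $H$ as a by-product.

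Two points in your Step 4 still need repair.

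First, the justification \emph{$\widehat Y\to\infty$ because $\vartheta\to0$} is not valid. The quantity $-\phi'(x)\vartheta(x)$ need not vanish: in the power case it is of order $x^{2-p}$, which explodes for $p>2$. What you need is $-\phi'(x)\vartheta(\eta^\sharp x)=o(\phi(x))$ as $x\to0$, and this uses \ref{A5}:
\begin{itemize}
\item The bound $-\phi'(x)\le(\eta^\sharp)^k(-\phi'(\eta^\sharp x))$, coming from boundedness of $\kappa^1$, controls the $\varpi$ part by $(\eta^\sharp)^{k+1}x$.
\item Boundedness of $\kappa^0$ controls the $x^2$ part by $(\eta^\sharp)^2x\,\kappa^0(x)\phi(x)$.
\end{itemize}
This is the paper's $\Upsilon$ argument, and the same estimate appears in the proof of Lemma~\ref{lem:lastcondwelldef}. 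It is precisely the point the paper flags as delicate, and without it neither uniqueness nor minimality can be invoked.

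Second, $\widehat Y$ is only defined on $[T-\delta,T)$. To place it in the uniqueness class, extend it backward by solving \eqref{eq:sing_BSDE} from the bounded value $\widehat Y_{T-\delta}\ge0$, and use comparison with the subsolution $0$ to keep it non-negative. The same two checks are needed for an arbitrary competitor $H'$ in your uniqueness claim for $H$: positivity near $T$ via Lemma~\ref{lem:lastcondwelldef} with $\varsigma=\|H'\|_{\mathcal H^{\delta,\theta}}$, comparison below that, and the limit.
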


From this result we deduce that there exists $\delta>0$ such that for all $t \in [T-\delta,T)$ it holds  
$$Y_t = \phi(A_t ) - \phi'(A_t )H_t = \xi_t - \phi'(A_t)H_t ,$$
where $\xi_t:=\phi(A_t)$, $t\in [T-\delta,T)$. 
Therefore Condition \eqref{eq:cond_term} is true if we can control the term $\phi'(A_t )H_t$, $t\in [T-\delta,T)$. 
Evoke that 
\cite[Lemma 11 and Equation (38)]{grae:popi:21} 
shows that there is a constant $k>0$ such that 
for 
all $t \in (T-\min(\delta,\etamin \epsilon),T)$ it holds  
\begin{equation}\label{eq:bound_second_term}
| - \phi'(A_t) H_t | \leq  
- \phi'\bigg( \dfrac{T-t}{\etamax} \bigg)\vartheta \bigg( \dfrac{T-t}{\etamin} \bigg) \|H\|_{\mathcal H^{\delta,\theta}} 
\leq - (\eta^\sharp)^k \phi'\bigg( \dfrac{T-t}{\etamin} \bigg)\vartheta \bigg( \dfrac{T-t}{\etamin} \bigg) \|H\|_{\mathcal H^{\delta,\theta}}  .
\end{equation}
When $\vartheta = \varpi$ (faster convergence), then we have an upper bound of the form $O(T-t)$.  However when $\vartheta(x) = x^2$ (slower convergence), the convergence to zero of this quantity has to be proved on a case-by-case basis and is discussed for some examples in Section  \ref{ssect:examples}.

\subsection{Proof of Theorem \ref{thm:gene_case}}

This proof is split in several steps. 

\medskip

\noindent {\bf Step 1.} 
Here we observe that, by a transformation, $Z^H$ can be removed from the drift of the BSDE~\eqref{eq:BSDE_H}.  
Since $\mu$ and $\zeta$ are bounded processes, we can define a new probability measure $\mathbb Q$ equivalent to $\mathbb P$  with 
$$\dfrac{d\mathbb Q}{d \mathbb P}\bigg|_{\mathcal F_T} = \exp\left(\int_0^T  \kappa^1_t  \sigma^\eta_t / {\eta_t} dW_t-\frac{1}{2}\int_0^T\bigl( \kappa^1_t  \sigma^\eta_t /{\eta_t} \bigr)^2dt\right).$$

Suppose first that $(H,Z^H)$ is a solution of the BSDE~\eqref{eq:BSDE_H} in the sense of this theorem. 
Then $\widehat H = \zeta H$ and $\widehat Z = \zeta Z^H$ satisfy the BSDE 
\begin{equation}\label{eq:BSDEwidehat}
d\widehat H_t  = \mu_t \zeta_t H_t dt  - \zeta_t F^H(t,(\zeta_t)^{-1} \widehat H_t,  (\zeta_t)^{-1} \widehat Z_t) dt  +  \widehat Z_t  dW_t  = -\widehat F(t,\widehat H_t) dt + \widehat Z_t d\widehat  W_t
\end{equation}
with $\widehat H_T=0$ 
where 
\begin{align*}
\widehat F(t,h) & = \frac{-\zeta_t }{\eta_t \phi'(A_t)} \left[ f \left(\phi(A_t)-\phi'(A_t) \dfrac{h}{\zeta_t}\right) - f(\phi(A_t)) +f'( \phi(A_t)) \phi'(A_t) \dfrac{h}{\zeta_t} \right] \\
& \quad - \frac{ \zeta_t \lambda_t }{\phi'\left( A_t \right) } +A_t \zeta_t \left[  \dfrac{ b^\eta_t }{\eta_t}+ \left(\dfrac{\kappa^1_t}{2}-1\right) \left( \dfrac{ \sigma^\eta_t }{\eta_t} \right)^2 \right] 
\end{align*}
and
$$\widehat  W_t =W_t - \int_0^t \kappa^1_s  \dfrac{\sigma^\eta_s }{\eta_s}  ds. 
$$
Note that $\widehat W$ is a $\mathbb Q$-Brownian motion by Girsanov's theorem.
Moreover, note that $\mathbb Q$-a.s.\ for all $t \in [T-\delta,T]$ it holds $\lvert \widehat H_t \rvert \le \zeta^* \|H\|_{\mathcal H^{\delta,\theta}} \, \theta(T-t)$ and $\int_0^{\cdot} \widehat Z d \widehat W$ is a BMO-martingale. 

Conversely, if $(\widehat H, \widehat Z)$ 
is a solution of the BSDE~\eqref{eq:BSDEwidehat} in the sense of this theorem, then  
setting $H_t=(\zeta_t)^{-1} \widehat H_t$ and $Z^H_t=(\zeta_t)^{-1} \widehat Z_t$ for all $t \in [0,T]$ yields a solution of the BSDE~\eqref{eq:BSDE_H} with the desired properties and the same $\delta>0$. 


Therefore, to avoid additional notation, we now consider\footnote{With an abuse of notation.} 
in Step 2 to Step 6 of this proof 
the BSDE 
\begin{equation}\label{eq:BSDE_F} 
dH_t  = -F(t,H_t) dt + Z_t dW_t
\end{equation}
where 
\begin{align} \nonumber 
F(t,h) &= \frac{-\zeta_t }{\eta_t \phi'(A_t)} \left[ f \left(\phi(A_t)-\phi'(A_t) \dfrac{h}{\zeta_t}\right) - f(\phi(A_t)) +f'( \phi(A_t)) \phi'(A_t) \dfrac{h}{\zeta_t} \right] \\ \label{eq:gene_F}
 &  \quad - \frac{ \zeta_t \lambda_t }{\phi'\left( A_t \right) } +A_t \zeta_t \left[  \dfrac{ b^\eta_t }{\eta_t}+ \left(\dfrac{\kappa^1_t}{2}-1\right)  \left( \dfrac{ \sigma^\eta_t }{\eta_t} \right)^2  \right] .
\end{align}

\medskip

\noindent {\bf Step 2.} 
In this step, we obtain an estimate for 
$$\Gamma_t^0 := \mathbb E_t\bigg[\int_t^T F(s,0) ds \bigg], \quad t\in [0,T].$$
We have for all $t \in [0,T]$ that 
\begin{align*}
|\Gamma_t^0|  \leq   
\mathbb E_t\bigg[\int_t^T |F(s,0)| \,ds\bigg] 
& \leq   \lambda^\star  \zeta^\star\mathbb E_t\bigg[\int_t^T  \frac{ 1 }{-\phi'( A_s) } ds \bigg] \\
	& \quad + \zeta^\star \left[  \left\| \dfrac{b^\eta}{\eta} \right\|_\infty +  \left\|\dfrac{\kappa^1}{2}-1\right\|_\infty \left\| \dfrac{\sigma^\eta}{\eta} \right\|_\infty^2  \right]  \mathbb E_t\bigg[\int_t^T A_s ds\bigg] . 
\end{align*}
Note that since $\phi'$ is negative and increasing (cf.~Lemma~\ref{lem:ineq_phi}), we have for all $t\in [0,T]$ that   
$$0\leq \mathbb E_t\bigg[\int_t^T  \frac{ 1 }{-\phi'\left( A_s \right) } ds \bigg] 
\leq  \int_t^T  \frac{ 1 }{-\phi'\left( (T-s)/\etamin \right) } ds  \leq \etamin \varpi \left( \dfrac{T-t}{\etamin} \right) .$$ 
Moreover, we obtain for all $t\in[0,T]$ that 
$$0\leq \mathbb E_t\bigg[\int_t^T A_s ds \bigg]  
\leq  \int_t^T \dfrac{(T-s)}{\etamin} ds  = \dfrac{(T-t)^2 }{2\etamin} = \dfrac{\etamin}{ 2} \left( \dfrac{T-t }{\etamin} \right)^2.$$
Hence it holds for all $t\in[0,T]$ that 
$$|\Gamma_t^0| \leq  \zeta^\star \etamin \lambda^\star  \varpi\left(\dfrac{T-t}{\etamin} \right)  +  \dfrac{  \zeta^\star \etamin}{2} \left[  \left\| \dfrac{b^\eta}{\eta} \right\|_\infty +  \left\|\frac{\kappa^1}{2}-1\right\|_\infty \left\| \dfrac{\sigma^\eta}{\eta} \right\|_\infty^2  \right] \left(\dfrac{T-t}{\etamin} \right)^2 .$$
The definition of $K$ given by \eqref{eq:def_const_K} comes from this inequality and we obtain for all $t\in[0,T]$ that  
\begin{align} \label{eq:init_value}
|\Gamma_t^0| & \leq K \vartheta \left( \dfrac{T-t}{\etamin} \right).
\end{align}

\medskip

\noindent {\bf Step 3.} 
We next introduce a solution operator for the BSDE~\eqref{eq:BSDE_F} close to the time~$T$. 
We claim that: 
\begin{Lemma}  \label{lem:bounded_generator}
Assume that Conditions {\rm \ref{A1}} to {\rm \ref{A6}} are satisfied. Let $\delta \in (0, \etamin \epsilon)$ and let $H$ be a process such that a.s.\ for all $t \in [T-\delta,T]$ it holds $|H_t | \leq 2K \vartheta( (T-t)/\etamin)$. 
Then 
$(F(t,H_t)-F(t,0))_{t\in[T-\delta,T)}\in L^\infty([T-\delta,T)\times \Omega;\mathbb R)$.
\end{Lemma}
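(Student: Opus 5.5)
Only the first line of \eqref{eq:gene_F} depends on $h$, so I would start from
\[
F(t,H_t)-F(t,0) = \frac{-\zeta_t}{\eta_t \phi'(A_t)}\Big[ f\big(\phi(A_t)-\phi'(A_t)\tfrac{H_t}{\zeta_t}\big) - f(\phi(A_t)) + f'(\phi(A_t))\phi'(A_t)\tfrac{H_t}{\zeta_t}\Big].
\]
Set $u_t = -\phi'(A_t)H_t/\zeta_t$. By the mean value theorem, for some $\theta_t\in(0,1)$,
\[
f(\phi(A_t)+u_t)-f(\phi(A_t))-f'(\phi(A_t))u_t = \big[f'(\phi(A_t)+\theta_t u_t)-f'(\phi(A_t))\big]u_t .
\]
Hence
\[
|F(t,H_t)-F(t,0)| \le \frac{1}{\etamin}\,|H_t|\,\big|f'(\phi(A_t)+\theta_t u_t)-f'(\phi(A_t))\big|,
\]
since the factor $\zeta_t/(\eta_t\phi'(A_t))$ cancels with the $\phi'(A_t)/\zeta_t$ in $u_t$. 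This is the key simplification. It remains to bound the $f'$-increment by the function $\Psi$ of \ref{A6}.

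The first task is to control the argument of $f'$. From $|H_t|\le 2K\vartheta((T-t)/\etamin)$ and $\zeta_t\ge\zeta_\star$, we get $|u_t|\le -\phi'(A_t)\,\tfrac{2K}{\zeta_\star}\,\vartheta((T-t)/\etamin)$. Since $(T-t)/\etamin \le \eta^\sharp A_t$ and $\vartheta$ is nondecreasing (it is a max of two nondecreasing functions), this gives
\[
|u_t|\le -\varsigma\,\phi'(A_t)\,\vartheta(\eta^\sharp A_t),
\]
using $\varsigma>2K/\zeta_\star$ from \ref{A6}. Because $\delta<\etamin\epsilon$, we have $A_t\le (T-t)/\etamin<\epsilon$ on $[T-\delta,T)$. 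So $\phi(A_t)+\theta_t u_t$ lies in the interval $I(A_t)=[\phi(x)+\varsigma\phi'(x)\vartheta(\eta^\sharp x),\ \phi(x)-\varsigma\phi'(x)\vartheta(\eta^\sharp x)]$ with $x=A_t$. By Lemma~\ref{lem:lastcondwelldef}, $I(x)\subset(0,\infty)$, so $f'$ is defined there.

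Next I need the bound $|f'(y)-f'(\phi(x))|\le\Psi(x)$ for all $y\in I(x)$, not only at the two endpoints that \ref{A6} controls. On $[0,\infty)$, $f$ is concave and $C^2$, so $f'$ is nonincreasing. Hence for $y$ between $\phi(x)$ and an endpoint, $f'(y)-f'(\phi(x))$ lies between $0$ and the corresponding endpoint value. This is where concavity enters.

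Putting the pieces together,
\[
|F(t,H_t)-F(t,0)|\le \frac{2K}{\etamin}\,\vartheta\big(\tfrac{T-t}{\etamin}\big)\,\Psi(A_t)\le \frac{2K}{\etamin}\,\vartheta(\eta^\sharp A_t)\,\Psi(A_t),
\]
again by monotonicity of $\vartheta$. The right-hand side is bounded by \ref{A6}, since $\vartheta(\eta^\sharp\cdot)\Psi$ is bounded on $(0,\epsilon)$. This gives the $L^\infty$ claim.

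The main obstacle I expect is the monotonicity of $\vartheta$ together with the direction of the comparison between $(T-t)/\etamin$ and $\eta^\sharp A_t$. Since $\eta_t\le\etamax$, we have $\eta^\sharp A_t=\etamax(T-t)/(\etamin\eta_t)\ge (T-t)/\etamin$, which is the direction needed. The monotonicity of $\varpi$ is clear because $-1/\phi'>0$.
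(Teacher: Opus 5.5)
Your proof is correct, but it takes a different route from the paper's proof of this lemma.

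\textbf{The paper's route.} It uses $\phi'=f\circ\phi$ and $\phi''=(f'\circ\phi)\phi'$ to split $F(t,H_t)-F(t,0)$ into three pieces:
\begin{itemize}
\item the ratio $\frac{\zeta_t}{\eta_t}\frac{f(\phi(A_t)-\phi'(A_t)\zeta_t^{-1}H_t)}{-f(\phi(A_t))}$;
\item the constant $\zeta_t/\eta_t$;
\item the linear term $\kappa^1_t H_t/(T-t)$.
\end{itemize}
The linear term is bounded using the boundedness of $\kappa^1$ and of $\vartheta(x)/x$ (from \eqref{eq:estim_vartheta}). The ratio is sandwiched via monotonicity of $f$ and then controlled by the auxiliary Lemma~\ref{lem:cond_bound}. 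That lemma in turn relies on \ref{A6}, on the boundedness of $\kappa^1$, and on that of $\vartheta(\eta^\sharp x)/x$.

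\textbf{Your route.} You apply the mean value theorem to the first-order Taylor remainder $f(\phi+u)-f(\phi)-f'(\phi)u$. The singular prefactor $1/\phi'(A_t)$ then cancels exactly, and you get $|F(t,H_t)-F(t,0)|\le \etamin^{-1}|H_t|\,\Psi(A_t)$. This is precisely the estimate of Lemma~\ref{lemma-locally-Lip} with $\widehat H=0$. You conclude directly from the requirement in \ref{A6} that $\vartheta(\eta^\sharp\cdot)\Psi$ be bounded.

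\textbf{What each approach buys.} Your version is shorter and makes Lemma~\ref{lem:cond_bound} unnecessary. It also needs no separate bound on $\kappa^1$ or $\vartheta(x)/x$ at this stage; (A5) enters only through $\zeta_\star$ and $K$. It further makes explicit a step the paper uses only implicitly, both in Lemma~\ref{lemma-locally-Lip} and in Lemma~\ref{lem:cond_bound}. Namely, monotonicity of $f'$ controls all interior points of the interval $[\phi(x)+\varsigma\phi'(x)\vartheta(\eta^\sharp x),\,\phi(x)-\varsigma\phi'(x)\vartheta(\eta^\sharp x)]$, not just the two endpoints that \ref{A6} addresses. The paper's decomposition shows in addition that each individual piece of the generator difference is bounded, and it exposes the ratio-plus-linear structure of $F$. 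Both arguments suffice for the lemma.
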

\begin{proof}[Proof of the lemma]
Evoke that the generator $F$ is given by \eqref{eq:gene_F}. We have for all $t \in [0,T)$ that 
\begin{align*}
F(t,H_t) -F(t,0)& = \frac{-\zeta_t }{\eta_t \phi'(A_t)} \left[ f \left(\phi(A_t)-\phi'(A_t) \dfrac{H_t}{\zeta_t}\right) - f(\phi(A_t)) +f'( \phi(A_t)) \phi'(A_t) \dfrac{H_t}{\zeta_t}\right] \\
& =  \frac{\zeta_t}{\eta_t }  \dfrac{f(\phi(A_t) - \phi'(A_t)(\zeta_t)^{-1} H_t)}{-f(\phi(A_t))} +  \frac{\zeta_t}{\eta_t }  -  \frac{1}{\eta_t } \dfrac{\phi^{(2)}(A_t)}{\phi'(A_t)} H_t\\
& =  \frac{\zeta_t}{\eta_t }  \dfrac{f(\phi(A_t) - \phi'(A_t) (\zeta_t)^{-1}H_t)}{-f(\phi(A_t))} +  \frac{\zeta_t}{\eta_t } + \kappa^1_t \dfrac{H_t}{T-t}.
\end{align*}
From our estimate \eqref{eq:estim_vartheta} on $\vartheta$, we know that $t \mapsto \vartheta( (T-t)/\etamin) / (T-t)$ is bounded on $[T-\delta,T)$. 
Thus $\dfrac{H}{T-\cdot}$ is bounded on $[T-\delta,T) \times \Omega$. 

Now remind that $-f$ is non-decreasing on $\mathbb R$ and non-negative on $[0,\infty)$ (Condition~\ref{A3}). 
Since $-\phi' \geq 0$ and $\lvert H_t \rvert \le 2 K \vartheta((T-t)/\etamin) \le 2K \vartheta(\eta^\sharp A_t)$ for all $t \in [T-\delta,T]$, we therefore obtain for all $t \in [T-\delta,T)$ that 
 \begin{align*}
 \dfrac{-f(\phi(A_t) + 2K (\zeta_t)^{-1} \phi'(A_t) \vartheta(\eta^\sharp A_t))}{-f(\phi(A_t))} &  \leq \dfrac{ -f(\phi(A_t) - \phi'(A_t) (\zeta_t)^{-1}H_t) }{-f(\phi(A_t))} \\
 & \leq  \dfrac{-f(\phi(A_t)- 2K (\zeta_t)^{-1} \phi'(A_t)  \vartheta(\eta^\sharp A_t)) }{-f(\phi(A_t))}.
\end{align*}
Since $0\leq 2K  (\zeta_t)^{-1}  \leq 2K  (\zeta_\star)^{-1}   \leq \varsigma$ for all $t \in [0,T]$ 
and since by $\delta < \etamin \epsilon$ 
we have $0<  A_t < \epsilon$ for all $t \in [T-\delta,T)$, 
the conclusion thus follows from~\ref{A6} and Lemma~\ref{lem:cond_bound} in the appendix.  
\end{proof}

The previous lemma allows to define for all $\delta \in (0, \etamin \epsilon)$ by
\[
\Gamma(H)=\left(\mathbb E_t\bigg[\int_t^T F(s,H_s)\,ds\bigg] = \mathbb E_t\bigg[\int_t^T F(s,H_s)-F(s,0)\,ds\bigg] +\Gamma^0_t \right)_{t\in[T-\delta,T]}
\]
the operator $\Gamma:\overline B_{\mathcal H^{\delta,\theta}}(2K)\to L^\infty(\Omega;C([T-\delta,T];\mathbb R))$.

\medskip

\noindent {\bf Step 4.} 
In the following technical lemma, we establish some kind of local Lipschitz property of the generator. 
\begin{Lemma} \label{lemma-locally-Lip} 
Assume that Conditions {\rm \ref{A1}} to {\rm \ref{A6}} are satisfied. Let $\delta \in (0, \etamin \epsilon)$. Then 
we have for any $H$ and $\widehat H$ in $\overline B_{\mathcal H^{\delta,\theta}}(2K)$ 
that 
\[
|F(t,H_t)-F(t,\widehat H_t)|\leq \frac{1}{\eta_t }\Psi(A_t) |H_t-\widehat H_t| \qquad \forall t\in[T-\delta,T)\ ,\ a.s. 
\]
\end{Lemma}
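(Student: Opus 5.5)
The natural route is the mean value theorem applied to the only $h$-dependent part of $F$. By \eqref{eq:gene_F}, the terms involving $\lambda_t$, $b^\eta_t$, $\sigma^\eta_t$ do not depend on $h$. For fixed $t\in[T-\delta,T)$ we therefore have
\begin{align*}
F(t,H_t)-F(t,\widehat H_t) &= \frac{-\zeta_t}{\eta_t\phi'(A_t)}\Big[ f\big(\phi(A_t)-\phi'(A_t)\zeta_t^{-1}H_t\big) - f\big(\phi(A_t)-\phi'(A_t)\zeta_t^{-1}\widehat H_t\big) \\
&\qquad + f'(\phi(A_t))\phi'(A_t)\zeta_t^{-1}(H_t-\widehat H_t)\Big].
\end{align*}
Write $g(h)=f(\phi(A_t)-\phi'(A_t)\zeta_t^{-1}h)$. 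Then $g'(h)=-f'(\phi(A_t)-\phi'(A_t)\zeta_t^{-1}h)\,\phi'(A_t)\zeta_t^{-1}$. By the mean value theorem there is $\bar h$ between $H_t$ and $\widehat H_t$ with
\[
F(t,H_t)-F(t,\widehat H_t)=\frac{1}{\eta_t}\Big[f'\big(\phi(A_t)-\phi'(A_t)\zeta_t^{-1}\bar h\big)-f'(\phi(A_t))\Big](H_t-\widehat H_t).
\]
The factors $\zeta_t$ and $\phi'(A_t)$ cancel exactly. Using the mean value theorem requires $f$ to be $C^1$ on the relevant segment. This holds as long as the argument $\phi(A_t)-\phi'(A_t)\zeta_t^{-1}h$ stays in $(0,\infty)$ for $h$ in that segment, which the next step guarantees.

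Next I would control $\bar h$. Both $H,\widehat H\in\overline B_{\mathcal H^{\delta,\theta}}(2K)$, so $|\bar h|\le 2K\theta(T-t)=2K\vartheta((T-t)/\etamin)$. Since $(T-t)/\etamin=\eta_t A_t/\etamin\le \eta^\sharp A_t$, and $\vartheta$ is non-decreasing ($\varpi$ and $x^2$ both are, since $-\phi'>0$), this gives $|\bar h|\le 2K\vartheta(\eta^\sharp A_t)$. Moreover $2K\zeta_t^{-1}\le 2K/\zeta_\star<\varsigma$. Hence the argument $y_t:=\phi(A_t)-\phi'(A_t)\zeta_t^{-1}\bar h$ lies in the interval
\[
I_t:=\big[\phi(A_t)+\varsigma\phi'(A_t)\vartheta(\eta^\sharp A_t),\ \phi(A_t)-\varsigma\phi'(A_t)\vartheta(\eta^\sharp A_t)\big].
\]
Because $\delta<\etamin\epsilon$, we have $A_t\in(0,\epsilon)$. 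With $\epsilon\le\widetilde\epsilon$ as arranged before the statement, Lemma~\ref{lem:lastcondwelldef} shows that the left endpoint of $I_t$ is positive, so $f\in C^2$ on all of $I_t$.

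The remaining step is to bound $|f'(y_t)-f'(\phi(A_t))|$ by $\Psi(A_t)$, where \ref{A6} only controls the endpoints of $I_t$. On $[0,\infty)$, $f$ is concave, so $f'$ is non-increasing there. Hence for $y\in I_t$, $f'(y)$ lies between the values of $f'$ at the two endpoints. The quantity $f'(y)-f'(\phi(A_t))$ is therefore bounded in absolute value by the maximum of the two endpoint deviations $|f'(\phi(A_t)\pm\varsigma\phi'(A_t)\vartheta(\eta^\sharp A_t))-f'(\phi(A_t))|$. By \ref{A6}, each of these is at most $\Psi(A_t)$. Combining this with the identity above yields the claim a.s.\ for all $t\in[T-\delta,T)$.

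The only delicate point is this monotonicity step, namely making sure that $y_t$ and $\phi(A_t)$ both lie in $I_t$ and that concavity gives the sandwich. The ordering $f'(\text{right endpoint})\le f'(y_t)\le f'(\text{left endpoint})$ holds with $\phi(A_t)$ in the middle, so the bound follows directly.
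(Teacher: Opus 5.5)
Your proof is correct and follows the paper's route. The paper computes $\frac{\partial F}{\partial h}(t,h)=\frac{1}{\eta_t}\big[f'(\phi(A_t)-\phi'(A_t)\zeta_t^{-1}h)-f'(\phi(A_t))\big]$ for $|h|\le 2K\vartheta((T-t)/\etamin)$, bounds it by the two endpoint deviations using concavity (replacing $2K\zeta_t^{-1}$ by $\varsigma$) and \ref{A6}, and then applies the mean value theorem. That is your argument with the mean value step and the derivative bound done in the opposite order.
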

\begin{proof}[Proof of the lemma]
Note first that for all $t\in [T-\delta,T)$ and $h\in\mathbb{R}$ with $|h| \leq  2K \vartheta((T-t)/\etamin)$ we have $\phi(A_t)-\phi'(A_t) (\zeta_t)^{-1}h > 0$ (cf.\ Lemma~\ref{lem:lastcondwelldef}). 
From \eqref{eq:gene_F} we have for all $t\in [T-\delta,T)$ and $|h| \leq  2K \vartheta((T-t)/\etamin)$ that 
$$\frac{\partial F}{\partial h}(t,h)= \frac{1}{\eta_t } \left[ f'(\phi(A_t)-\phi'(A_t) (\zeta_t)^{-1}h) - f'(\phi(A_t)) \right] .$$
Since $f$ is concave, $f'$ is non-increasing. Thereby for all $t\in [T-\delta,T)$ and $|h|\leq 2K\vartheta((T-t)/\etamin)$ we obtain
\begin{align*}
 \left|  \frac{\partial F}{ \partial h}(t,h) \right| &  \leq \frac{1}{\eta_t }  \max   \left\{ \left[ f'(\phi(A_t))- f'(\phi(A_t)-2K \phi'(A_t) (\zeta_t)^{-1}\vartheta(\eta^\sharp A_t))  \right] , \right. \\
&\qquad \qquad \qquad \left. \left[ f'(\phi(A_t) + 2K \phi'(A_t) (\zeta_t)^{-1}\vartheta(\eta^\sharp A_t)) - f'(\phi(A_t)) \right] \right\} \\
& \leq \frac{1}{\eta_t }  \max   \left\{ \left[ f'(\phi(A_t))- f'(\phi(A_t)-\varsigma \phi'(A_t) \vartheta(\eta^\sharp A_t))  \right] , \right. \\
&\qquad \qquad \qquad \left. \left[ f'(\phi(A_t) + \varsigma \phi'(A_t)\vartheta(\eta^\sharp A_t)) - f'(\phi(A_t)) \right] \right\} 
\end{align*}
Since $\delta < \etamin \epsilon$,  we obtain $0<  A_t < \epsilon$ for all $t \in [T-\delta,T)$, and hence Assumption \ref{A6} ensures for all $|h|\leq 2K\theta(T-t)$ and $t \in [T-\delta,T)$ that 
$$
 \left|  \frac{\partial F}{ \partial h}(t,h) \right|\le  \frac{1}{\eta_t }\Psi(A_t).
$$
The assertion then follows by the mean value theorem.
\end{proof}

\medskip

\noindent {\bf Step 5.} Now we can establish the existence of a unique solution $(H,Z^H)$ on the time interval $[T-\delta,T]$ for some $\delta < \etamin \epsilon$ by showing that the operator $\Gamma$ introduced in Step~3 is a contraction.  
For any $\delta \in (0, \etamin \epsilon)$ and $H,\widehat H\in\overline B_{\mathcal H^{\delta,\theta}}(2K)$, Step~4 yields for all $t\in[T-\delta,T]$ that 
\begin{align*}
|\Gamma(H)_t-\Gamma(\widehat H)_t|&\leq \mathbb E_t\bigg[\int_t^T| F(s,H_s)- F(s,\widehat H_s)|\,ds \bigg]\\
&\leq  \dfrac{1}{\etamin} \|H-\widehat H\|_{\mathcal H^{\delta,\theta}} \, \mathbb E_t\bigg[\int_t^T \Psi(A_s) \vartheta \bigg( \dfrac{T-s}{\etamin} \bigg)ds \bigg]\\
& \leq   \dfrac{1}{\etamin}  \|H-\widehat H\|_{\mathcal H^{\delta,\theta}} \, \vartheta \bigg( \dfrac{T-t}{\etamin} \bigg) \mathbb E_t\bigg[\int_t^T \Psi\bigg( \dfrac{T-s}{\etamax} \bigg) ds \bigg]\\
& \leq \eta^\sharp \|H-\widehat H\|_{\mathcal H^{\delta,\theta}} \,\vartheta \bigg( \dfrac{T-t}{\etamin} \bigg) \int_0^{\frac{T-t}{\etamax}} \Psi(u)du,
\end{align*}
where we have used 
the assumption that $\Psi$ is non-increasing. 
We can choose $\delta \in (0, \etamin \epsilon)$ (which we fix for the remainder of the proof) in such a way that for any $t \in [T-\delta,T]$ it holds  
$$ \eta^\sharp \int_0^{\frac{T-t}{\etamax}} \Psi(u)du \leq \dfrac{1}{2}.$$
Hence, $\Gamma$ is a $1/2$-contraction on $\overline B_{\mathcal H^{\delta,\theta}}(2K)$. 
Furthermore, for any $H\in\overline B_{\mathcal H^{\delta,\theta}}(2K)$, it follows with \eqref{eq:init_value} for all $t \in [T-\delta,T]$ that 
\begin{align*}
|\Gamma(H)_t|&\leq |\Gamma(H)_t-\Gamma^0_t|+|\Gamma^0_t|\\
	&\leq \frac{1}{2}  2K \vartheta \bigg( \dfrac{T-t}{\etamin} \bigg)  + K \vartheta \bigg( \dfrac{T-t}{\etamin} \bigg)=2K \vartheta \bigg( \dfrac{T-t}{\etamin} \bigg)  .
\end{align*}
We have proved that $\Gamma$ maps $\overline B_{\mathcal H^{\delta,\theta}}(2K)$ into itself and is a contraction. 
Using the properties of the map $\Gamma$, we deduce 
that there exists a unique process $H  \in \mathcal H^{\delta,\theta}$ such that a.s.\ for any $t \in [T-\delta,T]$ it holds 
$$H_t = \mathbb E_t\bigg[\int_t^T F(s,H_s)\,ds \bigg].$$
By the martingale representation theorem, we obtain the existence of $Z$ such that $(H,Z)$ solves~\eqref{eq:BSDE_F}.
Since $H \in \mathcal H^{\delta,\theta}$, from Lemma~\ref{lem:bounded_generator}, we deduce that the martingale $\int_{T-\delta}^\cdot Z dW$ is a BMO martingale (see~\cite{bech:06}). 

\medskip

\noindent {\bf Step 6.} Now $H_{T-\delta}$ is a bounded random variable.  If we consider the BSDE~\eqref{eq:BSDE_F} starting at the time $T-\delta$ from the terminal condition $H_{T-\delta}$, we can apply \cite[Proposition 5.24]{pard:rasc:14} to obtain a unique solution $(H,Z)$ on $[0,T-\delta]$ such that $H$ is bounded. The BMO property is implied by the boundedness of $H$.

\medskip

\noindent {\bf Step 7.} In this final step, we want to prove that the relation~\eqref{eq:asymp_beha_Y} holds.  Denote by $(H,Z^H)$ the solution of the BSDE~\eqref{eq:BSDE_H} constructed above and note that, by Step 1 and Step 5, for all $t\in[T-\delta, T]$ it holds 
$$\lvert H_t \rvert \le \frac{2K}{\zeta_\star} \vartheta \bigg( \dfrac{T-t}{\etamin} \bigg).$$ 
Let us define on $[0,T)$ the process $\widehat Y$ by 
$$\widehat Y_t =  \phi \left(\dfrac{T-t}{\eta_t} \right) - \phi'\left( \dfrac{T-t}{\eta_t} \right) H_t, \quad \forall t\in [0,T).$$ 
By construction, there exists a process $\widehat{Z}$ such that $(\widehat Y, \widehat{Z})$ satisfies the dynamics of the BSDE \eqref{eq:sing_BSDE} on $[0,T)$. However since $H$ may be negative, it is not clear that $\widehat Y$ has a limit at time $T$ and we cannot use a comparison principle (as in \cite[Proposition 20]{grae:popi:21}) to obtain that $\widehat Y = Y$. 

We first show that the a.s.\ limit at time $T$ of $\widehat Y$ is $\infty$. We have for all $t \in [T-\delta,T)$ that  
$$ - \dfrac{\phi'\left( \dfrac{T-t}{\eta_t} \right)}{\phi \left(\dfrac{T-t}{\eta_t} \right)} |H_t|  
\leq  \frac{2K}{\zeta_\star} \dfrac{-\phi'\left( \dfrac{T-t}{\eta_t} \right)}{\phi \left(\dfrac{T-t}{\eta_t} \right)} \vartheta \left( \dfrac{T-t}{\etamin} \right) 
\leq \frac{2K}{\zeta_\star} \dfrac{-\phi'\left( \dfrac{T-t}{\etamax} \right)}{\phi \left(\dfrac{T-t}{\etamin} \right)} \vartheta \left( \dfrac{T-t}{\etamin} \right).$$
From \cite[Lemma 11 and Equation (38)]{grae:popi:21}, we know that there exists $C>0$ and $\delta' \in (0,\delta)$ (depending on Condition \ref{A5}) such that for all $t \in [T-\delta',T)$ it holds 
$$-\phi'\left( \dfrac{T-t}{\etamax} \right) \leq - (\eta^\sharp)^C  \phi'\left( \dfrac{T-t}{\etamin} \right) .$$
Let $\Upsilon$ be the function $\dfrac{-\phi'}{\phi} \vartheta$.
Then we have for all $t \in [T-\delta',T)$ that 
$$ - \dfrac{\phi'\left( \dfrac{T-t}{\eta_t} \right)}{\phi \left(\dfrac{T-t}{\eta_t} \right)} |H_t|  
\leq  \frac{2K}{\zeta_\star} (\eta^\sharp)^C \Upsilon \left( \dfrac{T-t}{\etamin} \right) .$$
If $x\ge 0$ satisfies $\vartheta(x) = \varpi(x)$, evoke $0\leq \varpi(x) \leq  \dfrac{ x }{-\phi'\left( x\right) }$ and thus $ \Upsilon (x) \leq  \dfrac{x}{\phi(x)}$. 
For the other case (i.e., $\vartheta(x) = x^2$), note that \ref{A5} and Lemma~\ref{lem:ineg_kappa_i_x} yield the existence of some $c>0$ such that for all $x \in (0,\epsilon)$ we have 
$\Upsilon (x) \leq  c x$. In both cases $\Upsilon(x)$ tends to zero when $x$ goes to zero. Therefore 
$$\lim_{t\to +\infty} \widehat Y_t = + \infty.$$

Then arguing as in the proof of minimality of $Y$ (see \cite[Proposition 4]{krus:popi:15}) leads to $Y \leq \widehat Y$: we use a comparison principle between $(Y^n)_{n\in\mathbb{N}}$, i.e.\ the approximating sequence of $Y$, and~$\widehat Y$. 

To obtain the relation \eqref{eq:asymp_beha_Y}, we need to prove the converse inequality. 
We introduce a process $\mathcal H$ such that 
\begin{equation}\label{eq:definitionofmathcalHviaY}
    Y_t =  \phi \bigg(\dfrac{T-t}{\eta_t} \bigg) - \phi'\bigg( \dfrac{T-t}{\eta_t} \bigg) \mathcal H_t, \quad \forall t \in [0,T).
\end{equation}
Note that since $Y\le \widehat Y$, we have for all $t \in [0,T)$ that $\mathcal{H}_t \le H_t$. 
To derive a lower bound for $\mathcal H$, 
%
%
evoke that from \cite[Lemma 6]{grae:popi:21} we have for all $t \in [0,T)$ that  
$$Y_t \geq \phi \bigg( \mathbb E_t\bigg[ \int_t^T \dfrac{1}{\eta_s} ds \bigg] \bigg).$$
Moreover, it holds for all $t \in [0,T)$ that 
$$\mathbb E_t\bigg[ \int_t^T \dfrac{1}{\eta_s} ds \bigg] = \dfrac{T-t}{\eta_t} - \int_t^T (T-u) \mathbb E_t[ \varrho_u ] du,$$
with 
$$\varrho_u := \dfrac{1}{\eta_u} \left( \dfrac{b^\eta_u}{\eta_u} -  \left(  \dfrac{\sigma^\eta_u}{\eta_u}\right)^2 \right) \geq - \dfrac{1}{\etamin}  \left\| \dfrac{b^\eta_u}{\eta_u} - \left(  \dfrac{\sigma^\eta_u}{\eta_u}\right)^2 \right\|_\infty  \geq - \dfrac{2K}{(\etamin)^2} .$$
Hence we have for all $t\in[0,T)$ that 
\begin{align*}
Y_t & \geq \phi\bigg(  \dfrac{T-t}{\eta_t} - \int_t^T (T-u) \mathbb E_t[ \varrho_u ] du \bigg)  
\geq  \phi \bigg(  \dfrac{T-t}{\eta_t} + \dfrac{K}{(\etamin)^2} (T-t)^2  \bigg)\\
& = \phi \bigg(  \dfrac{T-t}{\eta_t} \bigg) + \dfrac{K}{(\etamin)^2} (T-t)^2  \int_0^1 \phi' \bigg(  \dfrac{T-t}{\eta_t}+a   \dfrac{K}{(\etamin)^2}  (T-t)^2  \bigg) da \\
& \geq  \phi \bigg(  \dfrac{T-t}{\eta_t} \bigg) + \dfrac{K}{(\etamin)^2} (T-t)^2  \phi' \bigg(  \dfrac{T-t}{\eta_t} \bigg). 
\end{align*}
It follows for all $t\in [0,T)$ that 
$$\mathcal H_t \geq - \dfrac{K}{(\etamin)^2}  (T-t)^2 .$$
In particular, for all $t\in[0,T)$, the negative part of $\mathcal H_t$ is smaller than or equal to 
$$\dfrac{K}{(\etamin)^2}  (T-t)^2 \leq K \vartheta \bigg( \dfrac{T-t}{\etamin} \bigg) 
\le \frac{2K}{\zeta_\star} \vartheta \bigg( \dfrac{T-t}{\etamin} \bigg) .$$ 
In addition, we can combine this with $\mathcal{H} \le H$ to obtain a.s.\ $\lim_{t \to T} \mathcal H_t = 0$. 
Moreover, note that from \eqref{eq:definitionofmathcalHviaY} we now deduce that there exists a process $\mathcal Z$ such that $(\mathcal H,\mathcal Z)$ is a solution of the BSDE~\eqref{eq:BSDE_H} with the terminal condition~$0$. 
Using \eqref{eq:BSDE_H}, for any $t \in [0,T]$, we have 
\begin{align*}
d(H_t- \mathcal H_t) & = \frac{1}{\eta_t \phi'\left( A_t \right) } \left[ f( \phi(A_t) - \phi'(A_t) H_t) -f( \phi(A_t) - \phi'(A_t) \mathcal H_t) \right] dt   \\ 
&  \quad + \frac{ f'(\phi(A_t))}{\eta_t }  (H_t- \mathcal H_t ) dt  - \mu_t (H_t- \mathcal H_t ) dt + (Z^H_t - \mathcal Z_t) \left( dW_t - \kappa^1_t  \dfrac{\sigma^\eta_t }{\eta_t}  dt \right) \\
& = \frac{1}{\eta_t \phi'\left( A_t \right) } \bigg[ f( \phi(A_t) - \phi'(A_t) H_t) -f( \phi(A_t) - \phi'(A_t) \mathcal H_t)  \\
& \hspace{3cm} +  f'( \phi(A_t) - \phi'(A_t) \mathcal H_t) \phi'(A_t) (H_t- \mathcal H_t )\bigg] dt   \\ 
& \quad + \frac{ f'(\phi(A_t)) -  f'( \phi(A_t) - \phi'(A_t) \mathcal H_t)}{\eta_t }  (H_t- \mathcal H_t ) dt  \\
& \quad  - \mu_t (H_t- \mathcal H_t ) dt + (Z^H_t - \mathcal Z_t) \left( dW_t - \kappa^1_t  \dfrac{\sigma^\eta_t }{\eta_t}  dt \right)  \\
& \leq \frac{ f'(\phi(A_t)) -  f'( \phi(A_t) - \phi'(A_t) \mathcal H_t)}{\eta_t }  (H_t- \mathcal H_t ) dt  \\
& \quad - \mu_t (H_t- \mathcal H_t ) dt + (Z^H_t - \mathcal Z_t) \left( dW_t - \kappa^1_t  \dfrac{\sigma^\eta_t }{\eta_t}  dt \right),
\end{align*}
where the last inequality is due to the concavity of the function $f$. From our estimates on $\mathcal H$ and Assumption \ref{A6}, we obtain (similar to the proofs of Lemma~\ref{lem:bounded_generator} and Lemma~\ref{lemma-locally-Lip}) for all $t\in[T- \delta,T)$ that 
\begin{equation}\label{eq:BoundUsingPsiFromA6}
    \left| \frac{ f'(\phi(A_t)) -  f'( \phi(A_t) - \phi'(A_t) \mathcal H_t)}{\eta_t } \right| \leq \dfrac{1}{\etamin} \Psi( A_t)\leq \dfrac{1}{\etamin} \Psi\bigg( \dfrac{T-t}{\etamax}  \bigg) .
\end{equation}
Moreover, using in addition that $H$ is bounded, we see that for all $t \in [0,T- \delta]$ the left-hand side in \eqref{eq:BoundUsingPsiFromA6} is bounded. 
We thus obtain that there exists an integrable function $\widetilde\Psi$ on $[0,\infty)$ such that for all $t \in [0,T]$ we have 
$$d(H_t- \mathcal H_t  )\leq \bigg[ \dfrac{1}{\etamin} \widetilde\Psi\bigg( \dfrac{T-t}{\etamax}  \bigg) -\mu_t \bigg] (H_t- \mathcal H_t ) dt + (Z^H_t - \mathcal Z_t) \left( dW_t - \kappa^1_t  \dfrac{\sigma^\eta_t }{\eta_t}  dt \right).$$
Using the explicit formula for a linear BSDE, we deduce that $H \leq \mathcal H$. Hence $\widehat Y \leq Y$ and the desired result is proved. As a by-product, we also obtain that $(H,Z^H)$ is the unique solution of the BSDE~\eqref{eq:BSDE_H}. 
This completes the proof of Theorem \ref{thm:gene_case}.

\subsection{Examples} \label{ssect:examples}

We present two examples where Assumptions \ref{A3} to \ref{A6} hold:
\begin{itemize}
\item[(i)] Power case: $f(y) = - y |y|^{q-1} $ for some $q>1$.
\item[(ii)] Exponential case:  $f(y) = -(\exp(ay)-1)$ for some $a> 0$.
\end{itemize}

\subsubsection{Power case} \label{ssect:power_case}

This case plays a key role in the liquidation problem. The function satisfies {\rm \ref{A3}}. Moreover with $p$ the H\"older conjugate of $q$,  we can explicitly compute all required quantities:
\begin{align*}
f'(y) & = - q |y|^{q-1}, \quad f^{(2)} (y ) = -q(q-1) |y|^{q-2} \mbox{sgn}(y),\\
G(x) & = (p-1)x^{1-q} ,\quad \phi(x)  = [(q-1)x]^{1-p} ,\quad  \phi'(x)= - [(q-1)x]^{-p}, \\
\kappa^0(x) & = \dfrac{-\phi'(x) x}{\phi(x)} = p-1,\quad \kappa^1(x) =p,\quad \kappa^2(x) = p+1 .
\end{align*}
Thus {\rm \ref{A4}} and {\rm \ref{A5}} hold. 
Furthermore, for $x$ close to $0$, we have 
\begin{align*}
    \varpi(x) & = \dfrac{(q-1)^p}{p+1} x^{p+1}, \quad \vartheta(x) = x^2 .
\end{align*}
Observe that for any $\varsigma>0$ there exists $R>0$ such that on $(R,+\infty)$ the function 
$$y \mapsto  \dfrac{f^{(2)}[y \pm \varsigma f(y) (G(y))^{2}]}{f^{(2)}(y)}
= \big(1\mp \varsigma (p-1)^2 y^{-q}\big)^{q-2} $$
is bounded. Hence, Lemma~\ref{lem:checking_A6} yields that {\rm \ref{A6}} holds with $\Psi$ a constant function. 


Expansion \eqref{eq:asymp_beha_Y} becomes
$$Y_t =\left( (p-1)\dfrac{\eta_t}{T-t} \right)^{p-1} +(p-1)^p \dfrac{\eta_t^p}{(T-t)^p}  H_t,\quad \forall t \in [0,T), $$
where $H_t = O((T-t)^2)$. 
We deduce that for $p < 2$, that is $q > 2$, 
$$Y_t = (p-1)^{p-1} \dfrac{\eta_t^{p-1}}{(T-t)^{p-1}} + O((T-t)^{2-p})$$
and Condition \eqref{eq:cond_term} holds with $\alpha = 2 - p >0$. However for $q \leq 2$, we cannot control $(T-t)^{-p} H_t$: 
$$ \lim_{t \to T}  -\phi'(T-t) (T-t)^2 \neq 0.$$

\subsubsection{Exponential case}

Here $f(y) = -(\exp(ay)-1)$ for some $a> 0$. Then \ref{A3} and \ref{A4} are true with 
$$G(y) =-\dfrac{1}{a} \log (1- \exp(-ay)),\quad \phi(x) = - \frac{1}{a} \log \left(1-e^{-ax}\right),\quad \phi'(x) = -  \dfrac{e^{-ax}}{1-e^{-ax}} .$$
It is proved in \cite[Section~3.1]{grae:popi:21} that $\kappa^1(x)$ (resp.\ $\kappa^2(x)$) is equivalent to $1$ (resp.\ $2$) when $x$ goes to 0. Thus \ref{A5} also holds. Notice that  
$$\kappa^0(x) = -\dfrac{\phi'(x)}{\phi(x) } x=  - a \dfrac{xe^{-ax}}{1-e^{-ax}}  \big(\log(1-e^{-ax})\big)^{-1} \underset{x\to 0}{\sim} - \big(\log(1-e^{-ax})\big)^{-1}.$$
Moreover we have for all $x\ge 0$ that 
$$\varpi(x) = \frac{1}{a} (e^{ax}-1 - ax).$$
On some neighborhood of zero, $\varpi$ is equivalent to $x\mapsto (a/2)x^2$. Or in other words, $-\phi'(x) x^2\sim 2x/a $. Somehow, the exponential case is a kind of critical case, where $\varpi(x)$ and $x^2 $ are equivalent.  The rate of convergence $\vartheta(x)$ is $x^2$ as in the power case.

Here $f(y) = -(\exp(ay)-1)$, $\vartheta(x) = x^2$ and $f^{(2)}(y) = -a^2 \exp(ay)$. 
We thus obtain for all $\varsigma>0$ and $y>0$ that 
$$ \dfrac{f^{(2)}[y \pm \varsigma f(y) (G(y))^{2}]}{f^{(2)}(y)} = \exp \left( \pm a \varsigma f(y) (G(y))^{2} \right),$$ 
and 
$$-f(y) (G(y))^2 =\dfrac{1}{a^2}  (\exp(ay)-1) (\log (1- \exp(-ay)))^2 \underset{y \to \infty}{\sim} \dfrac{1}{a^2}  (e^{ay}-1) e^{-2ay} $$ 
is bounded (see also Lemma \ref{eq:lemma_exp_decreasing_fct}). Therefore using Lemma \ref{lem:checking_A6}, we conclude that \ref{A6} holds. 

Since $\vartheta$ is equivalent to $\varpi$ close to $0$, we deduce from~\eqref{eq:bound_second_term} that Condition \eqref{eq:cond_term} holds with $\alpha = 1$.

\subsection{Comparing the generator with the exponential case} \label{ssect:vartheta}

We already mention above that the exponential case is a kind of critical case. Indeed if we can ``compare'' $f$ with an exponential, then we can deduce whether $\vartheta(x) = \varpi(x)$ or $\vartheta(x) =x^2$. 

\begin{Lemma} \label{eq:lemma_exp_decreasing_fct}
Assume Condition~{\rm \ref{A3}}, that $f(x)<0$ for all $x>0$, and that there exist $R>0$ and $\beta>0$ such that 
$y\mapsto f(y)\exp(-\beta y)$ is non-increasing on $(R,+\infty)$.
%
Then {\rm \ref{A4}} holds and 
$x\mapsto -\phi'(x) x $ is bounded by $1/\beta$ on $(0,G(R))$. In particular, there exists $C>0$ such that $0\leq \phi(x) \leq C -\dfrac{1}{\beta} \log(x)$ for all $x \in (0,G(R))$. 
\end{Lemma}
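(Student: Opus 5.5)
The plan is to exploit the monotonicity hypothesis to compare $-f$ with an exponential on $(R,\infty)$, and then to translate this into statements about $G$ and $\phi$ through the relations $G'=1/f$ and $\phi'=f\circ\phi$.

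\textbf{Step 1: \ref{A4} holds.} Since $y\mapsto f(y)e^{-\beta y}$ is non-increasing on $(R,\infty)$, for $y\ge x>R$ we have $f(y)e^{-\beta y}\le f(x)e^{-\beta x}<0$. Hence
\[
-f(y)\ge -f(x)e^{\beta(y-x)} .
\]
Integrating gives
\[
G(x)=\int_x^\infty \frac{dy}{-f(y)} \le \frac{1}{-f(x)}\int_x^\infty e^{-\beta(y-x)}dy=\frac{1}{-\beta f(x)} ,
\]
which is finite on $(R,\infty)$. On $[x,R]$ with $x>0$, the integrand $1/(-f)$ is continuous and positive, because $f<0$ on $(0,\infty)$. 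So $G$ is well defined, with values in $(0,\infty)$ on all of $(0,\infty)$, and \ref{A4} holds.

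\textbf{Step 2: the bound $-\phi'(x)x\le 1/\beta$.} Step 1 gives $-f(y)G(y)\le 1/\beta$ for $y>R$. Since $G$ is decreasing, $x\in(0,G(R))$ corresponds to $y=\phi(x)>R$ with $G(y)=x$. Then
\[
-\phi'(x)x=-f(\phi(x))\,G(\phi(x))\le 1/\beta .
\]

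\textbf{Step 3: the logarithmic bound on $\phi$.} Integrate $-\phi'(z)\le 1/(\beta z)$ from $x$ to a fixed point $x_0\in(0,G(R))$:
\[
\phi(x)=\phi(x_0)+\int_x^{x_0}(-\phi'(z))\,dz\le \phi(x_0)+\frac{1}{\beta}\bigl(\log x_0-\log x\bigr).
\]
This is of the form $C-\frac1\beta\log x$ with $C=\phi(x_0)+\frac1\beta\log x_0$. Positivity $\phi\ge0$ holds because $\phi$ maps into $(0,\infty)$. To reach all of $(0,G(R))$ rather than $(0,x_0]$, let $x_0\uparrow G(R)$ and use continuity of $\phi$ at $G(R)$. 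Alternatively, enlarge $C$ to absorb the remaining compact range, where $\phi$ is bounded, after adding $\frac1\beta|\log G(R)|$.

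The only mildly delicate point is the monotonicity comparison in Step 1, namely getting the sign of the inequality right since $f$ is negative. Everything else is direct calculus.
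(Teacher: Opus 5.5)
Your proposal is correct and follows essentially the same route as the paper: the comparison $-f(w)\ge -f(y)e^{\beta(w-y)}$ for $R<y\le w$, integration to obtain \ref{A4} and $-f(y)G(y)\le 1/\beta$ on $(R,\infty)$, the change of variables $y=\phi(x)$, and integration of $-\phi'(z)\le 1/(\beta z)$. Your Step 3 also covers all of $(0,G(R))$, for instance by integrating up to $G(R)$ where $\phi(G(R))=R$, and enlarges $C$ so that it is positive; this fills in a detail the paper leaves implicit.
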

\begin{proof}
For any $R < y \le w$, it holds that $f(y)e^{-\beta y} \geq f(w) e^{- \beta w}$ and thus 
$$\dfrac{f(y)}{f(w)} \leq e^{-\beta w+\beta y} .$$
First we obtain that $[x,+\infty) \ni w \mapsto -1/f(w)$ is integrable for any $x>0$, that is \ref{A4} holds. Then using the above relation in the integration leads to
$-f(y) G(y) \leq \dfrac{1}{\beta}$ for all $y>R$.
Since $G$ is non-increasing with inverse $\phi$ and $-\phi'(x) x =-f(\phi(x)) x$ for all $x>0$, a change of variables $y=\phi(x)$ yields that 
$x\mapsto -\phi'(x) x $ is bounded by $1/\beta$ on $(0,G(R))$. 
Moreover, integrating with respect to $x$ on both sides of $-\phi'(x) \le \frac{1}{x} \frac{1}{\beta}$ yields the bound for~$\phi$. 
\end{proof}

From this lemma we deduce that 
$$\dfrac{x^2}{2} = \int_0^x z dz \leq  \dfrac{1}{\beta} \varpi(x)$$ 
for sufficiently small $x>0$.
Therefore we deduce from \eqref{eq:bound_second_term} that, for $t$ close to $T$, we in this case have  
$$Y_t =\phi \left( \dfrac{T-t}{\eta_t} \right) + O(T-t).$$

We know that there are examples (the power or exponential cases) where $\varpi$ is dominated by $x \mapsto x^2$ close to $0$. In these cases, the estimate in \eqref{eq:bound_second_term} is of 
order $O(\phi'(T-t) (T-t)^2)$ for $t$ close to $T$. 
As an analogue to Lemma~\ref{eq:lemma_exp_decreasing_fct}, we have: 
\begin{Lemma} \label{eq:lemma_exp_increasing_fct}
Assume that Conditions~{\rm \ref{A3}} and~{\rm \ref{A4}} are satisfied and that there exist $R>0$ and $\beta>0$ such that 
$y\mapsto f(y)\exp(-\beta y)$ is non-decreasing on $(R,+\infty)$.
Then 
$x\mapsto -\phi'(x) x $ is bounded away from zero on $(0,G(R))$ and for all $x\in(0,G(R))$ it holds $ \varpi(x) \leq (\beta /2)x^2$. 
\end{Lemma}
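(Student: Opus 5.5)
The plan is to mirror the proof of Lemma~\ref{eq:lemma_exp_decreasing_fct}, with every inequality reversed. First, Condition~\ref{A4} forces $f(y)<0$ for all $y>0$. Indeed, $f$ is non-increasing with $f(0)=0$, so $f \le 0$ on $[0,\infty)$. If $f$ vanished at some $y_0>0$, it would vanish on all of $[0,y_0]$. The integrand $1/(-f)$ would then be infinite there, so $G(x)$ could not be finite and positive for $x<y_0$. Hence we may divide by $f(w)$ for $w>0$.

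Next, fix $R<y\le w$. Monotonicity of $y\mapsto f(y)e^{-\beta y}$ on $(R,\infty)$ gives $f(y)e^{-\beta y}\le f(w)e^{-\beta w}$. Dividing by the negative number $f(w)$ yields
$$\frac{f(y)}{f(w)} \ge e^{\beta(y-w)}.$$
Writing $-f(y)G(y)=\int_y^\infty \frac{f(y)}{f(w)}\,dw$ and integrating this lower bound, we obtain $-f(y)G(y)\ge \int_y^\infty e^{\beta(y-w)}dw = 1/\beta$ for all $y>R$.

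Then we change variables $y=\phi(x)$. Since $G$ is decreasing with inverse $\phi$, the condition $x\in(0,G(R))$ is equivalent to $\phi(x)>R$. Using $\phi'=f\circ\phi$ and $G(\phi(x))=x$, we have $-\phi'(x)x=-f(\phi(x))G(\phi(x))\ge 1/\beta$ on $(0,G(R))$. This is the claimed bound away from zero.

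Finally, for $z\in(0,G(R))$ we get $\frac{1}{-\phi'(z)}\le \beta z$. Integrating from $0$ to $x<G(R)$ in the definition \eqref{eq:rate_conv} of $\varpi$ gives $\varpi(x)\le (\beta/2)x^2$.

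The only delicate point is the sign bookkeeping when dividing by $f(w)<0$; the rest is a direct integration. The strict negativity of $f$ on $(0,\infty)$ must be derived from \ref{A4}, since, unlike in Lemma~\ref{eq:lemma_exp_decreasing_fct}, it is not assumed here.
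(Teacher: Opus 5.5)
Your proof is correct and is exactly the mirrored version of the argument for Lemma~\ref{eq:lemma_exp_decreasing_fct} that the paper invokes. The reversed inequality $f(y)/f(w)\ge e^{\beta(y-w)}$ gives $-f(y)G(y)\ge 1/\beta$ on $(R,\infty)$, the substitution $y=\phi(x)$ turns this into $-\phi'(x)x\ge 1/\beta$ on $(0,G(R))$, and integrating $1/(-\phi'(z))\le \beta z$ yields $\varpi(x)\le(\beta/2)x^2$; your derivation of $f<0$ on $(0,\infty)$ from \ref{A4}, as in Lemma~\ref{lem:ineq_phi}, correctly supplies the one hypothesis not stated explicitly in this lemma.
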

\begin{proof}
    The claim can be proven using similar arguments as in the proof of Lemma~\ref{eq:lemma_exp_decreasing_fct}.
\end{proof}

\section{Conclusion and perspectives} \label{sect:power_case_expansion}

In this paper, we provide a complete algorithm, based on Assumption~\ref{ass:approximation}, to compute the solution of a BSDE with terminal condition equal to $+\infty$ for a large class of generators $f$. To check Assumption~\ref{ass:approximation}, we prove an asymptotic expansion \eqref{eq:asymp_beha_Y} for the solution $Y$ near the terminal time $T$. 

Hence we are able to obtain the value function together with the optimal control for the related liquidation problem to minimize~\eqref{eq:liquidationproblemcosts} when $p < 2$. Notice that it covers the case of~\cite{almg:thum:haup:05}, where the authors analyze a large data set from the Citigroup US equity trading desks and show that $p=1.6$ is a good estimate of this parameter. 

When $p \geq 2$, a further expansion is required. Let us briefly explain how it can be done. Evoke that in \eqref{eq:asymp_beha_Y} we have in the power case that 
\begin{equation*} 
Y_t =  \left(\dfrac{\eta_t}{(q-1)(T-t)} \right)^{p-1} + \left(\dfrac{1}{(q-1)(T-t)} \right)^{p}  \widehat H_t,\quad \forall t \in [0,T),
\end{equation*}
with $\widehat H = \eta^p H$. The dynamics of $\widehat H$ follow from the BSDE \eqref{eq:BSDE_H} for $H$ and Assumption~\ref{A2} for $\eta$ and with similar arguments as in \cite[Section 4.1]{grae:popi:21}, we can prove that 
\begin{align*}
Y_t  & =  (p-1)^{p-1}  \dfrac{\eta_t^{p-1}}{(T-t)^{p-1}} + \dfrac{(p-1)^p}{(T-t)^{p}} \mathbb E_t\bigg[  \int_t^T (T-s) \eta_s^{p-1} \bigg(  \dfrac{b^\eta_s}{\eta_s}  + \left(   \dfrac{p}{2} -1 \right) \left( \dfrac{\sigma^\eta_s}{\eta_s} \right)^2  \bigg) ds  \bigg]  \\
& \quad +  (T-t)^{-p} \mathbb E_t\bigg[  \int_t^T  (T-s)^p \lambda_s ds \bigg] + O((T-t)^{3-p}). 
\end{align*}
Since for all $t \in [0,T)$ we have 
$$ (T-t)^{-p} \mathbb E_t\bigg[  \int_t^T  (T-s)^p |\lambda_s| ds  \bigg]  \leq \dfrac{\lambda^\star }{p+1} (T-t),$$
we deduce for $2 \leq p < 3$ and any $t \in [0,T)$ that 
$$Y_t =  \xi_t +  O((T-t)^{3-p})$$
with 
\begin{equation}\label{eq:xiconclusion}
    \xi_t = (p-1)^{p-1}  \dfrac{\eta_t^{p-1}}{(T-t)^{p-1}} + \dfrac{(p-1)^p}{(T-t)^{p}} \mathbb E_t\bigg[  \int_t^T (T-s) \eta_s^{p-1} \bigg(  \dfrac{b^\eta_s}{\eta_s}  + \left(   \dfrac{p}{2} -1 \right) \left( \dfrac{\sigma^\eta_s}{\eta_s} \right)^2  \bigg) ds   \bigg] .
\end{equation}
Hence Assumption~\ref{ass:approximation} holds with $\alpha = 3-p > 0$. However, for a numerical scheme, the conditional expectation in~\eqref{eq:xiconclusion} 
has to be precisely computed, since for $2 < p< 3$ (that is $3/2 < q < 2$) this second term is of order $(T-t)^{2-p}$ and explodes as $t$ tends to $T$ (for $q=p=2$, with a continuity condition on $b^\eta$ and $\sigma^\eta$, this term has a finite equivalent as $t$ tends to $T$). 

When $p \geq 3$, we can iterate this procedure. But it requires to slot together more and more conditional expectations, which creates new issues for the numerical part. These questions are left for further research.

\appendix 
\section{Proofs of some technical results}
\label{sect:appendix}

Here some technical results are proved. 

We first show that the first derivative of $\phi$ is negative and the second derivative of $\phi$ is positive. 
\begin{Lemma}\label{lem:ineq_phi}
    Assume that Conditions {\rm \ref{A3}} and {\rm \ref{A4}} are satisfied. Then it holds for all $x>0$ that $\phi'(x)<0$ and $\phi''(x)>0$. 
\end{Lemma}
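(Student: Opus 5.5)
We argue directly from the defining relation of $\phi$. By \ref{A4}, $G(x)=\int_x^\infty \frac{1}{-f(y)}\,dy$ is finite and positive for every $x>0$. Since the tail integral of a non-negative function is finite and strictly positive on every half-line $[x,\infty)$, we get that $-f(y)>0$ on sets of positive measure beyond any $x$. Combined with the monotonicity of $f$ from \ref{A3} ($f$ non-increasing, $f(0)=0$), this gives $f(y)<0$ for all $y>0$. In detail: if $f(y_0)=0$ for some $y_0>0$, then $f=0$ on $[0,y_0]$, and then $G(x)$ would be infinite for $x<y_0$ (the integrand $1/(-f)$ is $+\infty$ there). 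For $G$ to be well defined on all of $(0,\infty)$ we therefore need $f<0$ on $(0,\infty)$.

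Hence $G$ is differentiable on $(0,\infty)$ with $G'(x)=1/f(x)<0$. So $G$ is a strictly decreasing $C^1$ bijection onto its image, and its inverse $\phi$ is differentiable with
$$\phi'(x)=\frac{1}{G'(\phi(x))}=f(\phi(x)).$$
Since $\phi(x)>0$, this yields $\phi'(x)=f(\phi(x))<0$, which is the first claim.

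For the second derivative, $f$ is $C^2$ on $[0,\infty)$ by \ref{A3}, so $\phi'=f\circ\phi$ is $C^1$ and
$$\phi''(x)=f'(\phi(x))\,\phi'(x)=f'(\phi(x))\,f(\phi(x)).$$
We already have $f(\phi(x))<0$, so it remains to show $f'(y)<0$ for every $y>0$. This is the main (though mild) obstacle, since monotonicity only gives $f'\le 0$. Concavity of $f$ on $[0,\infty)$ settles it. Suppose $f'(y_0)=0$ for some $y_0>0$. Because $f'$ is non-increasing (concavity) and $f'\le0$, we get $f'=0$ on $[0,y_0]$, hence $f(y_0)=f(0)=0$. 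This contradicts $f(y_0)<0$. Therefore $f'(\phi(x))<0$, and $\phi''(x)$ is a product of two negative numbers, so $\phi''(x)>0$.
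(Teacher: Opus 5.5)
Your proof is correct and follows the paper's argument. Both proofs exclude $\phi'(x_0)=f(\phi(x_0))=0$ because $f$ would then vanish on $[0,\phi(x_0)]$, contradicting \ref{A4}, and both exclude $f'(y_0)=0$ for $y_0>0$ because concavity and monotonicity would force $f\equiv 0$ on $[0,y_0]$. The only extra step on your side is deriving $\phi'=f\circ\phi$ from the inverse function theorem, which the paper takes as known.
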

\begin{proof}
    Observe that $\phi'=f\circ \phi$ and \ref{A3} ensure for all $x >0$ that $\phi'(x)\ge 0$. 
    Suppose that there exists $x_0 >0$ such that $\phi'(x_0)=0$. This implies that $f(y_0)=0$ for $y_0=\phi(x_0)>0$ and thus, by \ref{A3}, that $f(x)=0$ for all $x \in [0,y_0]$. But this contradicts \ref{A4}, hence $\phi'(x)<0$ for all $x>0$.

    Note that for all $x>0$ it holds $\phi''(x)=f'(\phi(x)) \phi'(x)$. 
    Since $f$ is concave on $[0,+\infty)$ and $f$ is non-increasing, we have $f'(y) \leq f'(0) \leq 0$ for all $y \geq 0$. If there exists $y_0 > 0$ such that $f'(y_0)=0$, then the function $f$ would be constant equal to zero on $[0,y_0]$, which again contradicts \ref{A4}. 
    This proves for all $x >0$ that $f'(\phi(x))<0$ and completes the proof.
\end{proof}

\medskip

We use the previous result in the following proofs of Lemma~\ref{lem:ineg_kappa_i_x} and Lemma~\ref{lem:ineg_kappa_i}.

\noindent \begin{proof}[Proof of Lemma~\ref{lem:ineg_kappa_i_x}]
Evoke that for all $x > 0$ and $i\in\{0,1,2\}$ it holds  
$$\kappa^i(x) = - \dfrac{\phi^{(i+1)} \left(x \right)  }{\phi^{(i)}\left(x \right)}x$$
and recall that $\phi' = f \circ \phi$.
Lemma~\ref{lem:ineq_phi} immediately yields for all $x>0$ that $0\leq \kappa^0(x)$. 
Next, note for all $x>0$ that  
$$\kappa^1(x) - \kappa^0(x) = \dfrac{x\phi(x)}{-\phi'(x)} \left( \dfrac{\phi'(x)}{\phi(x)} \right)'.$$
This quantity is non-negative if $\phi' / \phi$ is non-decreasing, which is equivalent to $(0,\infty) \ni y \mapsto f(y) / y$ being non-increasing (variable change $y = \phi(x)$). Since $f$ is concave on $[0,\infty)$, we have for any $v > 0$ and $y > 0$ that  
$$\dfrac{f(v) - f(y)}{v-y} \leq \dfrac{f(v)-f(0)}{v-0} = \dfrac{f(v)}{v}.$$
Letting $v$ go to $y$, we deduce that for all $y > 0$ it holds $yf'(y) \leq f(y)$, that is $(0,\infty) \ni y \mapsto f(y) / y$ is non-increasing. 
Similarly, we consider for all $x>0$ the difference 
$$\kappa^2(x) - \kappa^1(x) =- \dfrac{\phi^{(3)} \left(x \right)  }{\phi^{(2)}\left(x \right)}x + \dfrac{\phi^{(2)} \left(x \right)  }{\phi'\left(x \right)}x = x \dfrac{\phi^{(3)} \left(x \right) \phi'(x) -  \left(\phi^{(2)}(x) \right)^2  }{-\phi'\left(x \right) \phi^{(2)}\left(x \right)}, $$
where 
$-\phi'(x)\phi^{(2)}(x)>0$ due to Lemma~\ref{lem:ineq_phi}. 
Observe moreover that   
$$\phi^{(3)}(x) = (f''\circ \phi)(x)(\phi'(x))^2 + ((f'\circ \phi)(x))^2 \phi'(x) $$
and 
\begin{align*} 
\phi^{(3)}(x) \phi'(x) & = (f''\circ \phi)(x)(\phi'(x))^3 + ((f'\circ \phi)(x))^2 ( \phi'(x))^2 \\
& \geq ((f'\circ \phi)(x))^2 ( \phi'(x))^2 = \big(\phi^{(2)}(x) \big)^2,
\end{align*}
since $f'' \leq 0$ because $f$ is concave on $[0,\infty)$. 
We have thus proved \eqref{ineq:kappa_i}.
\end{proof}

\noindent \begin{proof}[Proof of Lemma~\ref{lem:ineg_kappa_i}]
From Lemma~\ref{lem:ineg_kappa_i_x}, 
if the process $\kappa^2$ is bounded on $[0,T)\times \Omega$, the same holds for $\kappa^1$ and $\kappa^0$ and, with \ref{A1} and \ref{A2}, for $\mu$. 
Assumption \ref{A5}, together with \ref{A1}, implies that the process $\kappa^2$ is bounded on $[T-\etamin \frac{\epsilon}{2},T) \times \Omega$. 
On $[0,T-\etamin \frac{\epsilon}{2}] \times \Omega$ the process $A$ is bounded from below by $\frac{\etamin}{\etamax} \frac{\varepsilon}{2}$ and from above by $\frac{T}{\etamin}$. 
Since $f$ is of class $C^2$ on $[0,\infty)$, we have that $\phi$ is of class $C^3$. Therefore, $\phi^{(3)}$ is bounded on $[\frac{\etamin}{\etamax} \frac{\varepsilon}{2},\frac{T}{\etamin}]$. 
Moreover, continuity of $\phi^{(2)}$ and Lemma~\ref{lem:ineq_phi} ensure that $\phi^{(2)}$ is bounded on $[\frac{\etamin}{\etamax} \frac{\varepsilon}{2},\frac{T}{\etamin}]$ with a lower bound strictly greater than zero. 
We thus conclude that the process $\kappa^2$ is bounded also on $[0,T-\etamin \frac{\epsilon}{2}] \times \Omega$.
%
\end{proof}

Next we prove Lemma~\ref{lem:lastcondwelldef}.

\noindent \begin{proof}[Proof of Lemma~\ref{lem:lastcondwelldef}]
From \cite[Lemma 11 and Equation (38)]{grae:popi:21}, we know that there exists a constant~$k>0$ (note that $k$ is the bound on $x \mapsto \kappa_1(x)$ and depends on \ref{A5}) such that for all $x \in (0,(\eta^\sharp)^{-1}\epsilon)$ it holds 
\begin{equation*}
    1 \leq \dfrac{ -\phi'(x) }{-\phi'\left(\eta^\sharp x\right) }\leq (\eta^\sharp)^k. 
\end{equation*}
From \eqref{eq:estim_vartheta} and since $-\phi'$ is non-negative and non-increasing we thus obtain for all $x \in (0,(\eta^\sharp)^{-1}\epsilon)$ that 
$$  
0\leq-\phi'(x) \vartheta(\eta^\sharp x) \leq \eta^\sharp x  \max \left( \dfrac{ -\phi'(x) }{-\phi'\left(\eta^\sharp x\right) } , -\eta^\sharp x \phi'(x)\right) \leq \eta^\sharp x  \max \left( (\eta^\sharp)^k, \eta^\sharp \kappa^0(x) \phi(x)\right).
$$
Due to \ref{A5} and Lemma~\ref{lem:ineg_kappa_i_x}, we also know that $x \mapsto \kappa^0(x)$ is bounded on $(0,\epsilon)$. Hence 
there exists $\epsilon'>0$ such that for all $x \in (0,\epsilon')$ we have  
$$0\leq \varsigma(\eta^\sharp)^2 x \kappa^0(x) \leq \dfrac{1}{2}.$$
Notice that $\phi(x) + \varsigma \phi'(x) \vartheta(\eta^\sharp x) = \phi(x) - \varsigma (-\phi'(x)) \vartheta(\eta^\sharp x) $ for all $x >0$. 
Hence we deduce the desired result since $\phi$ tends to $\infty$ when $x$ goes to zero. 
\end{proof}

\medskip

Now let us prove Lemma~\ref{lem:checking_A6}. 

\noindent \begin{proof}[Proof of Lemma \ref{lem:checking_A6}]
For $x>0$ close to $0$ we have 
\begin{align*}
& f'(\phi(x))- f'(\phi(x) \pm \varsigma \phi'(x) \vartheta(x)) \\
& =\mp \varsigma \phi'(x) \vartheta(x)  \int_0^1 f^{(2)} (\phi(x) \pm \alpha \varsigma \phi'(x) \vartheta(x)) d\alpha \\
& =\mp \varsigma \phi'(x) f^{(2)} (\phi(x))  \vartheta(x)  \int_0^1 \dfrac{ f^{(2)} (\phi(x) \pm \alpha \varsigma \phi'(x) \vartheta(x) )}{f^{(2)} (\phi(x))}d\alpha \\
& =\mp \varsigma \left(    \kappa^2(x) \kappa^1(x) -(\kappa^1(x))^2\right)\int_0^1 \dfrac{ f^{(2)} (\phi(x) 
\pm \alpha  \varsigma \phi'(x) x^2) }{f^{(2)} (\phi(x))}d\alpha .
\end{align*}
Since $x\mapsto \kappa^1(x)$ and $x\mapsto\kappa^2(x)$ are bounded functions on $(0,\epsilon)$ (due to \ref{A5} and Lemma~\ref{lem:ineg_kappa_i_x}), using the change of variable $y=\phi(x)$, we obtain the wanted result. 
\end{proof}

Let us finish with the next result. 
\begin{Lemma}\label{lem:cond_bound}
Assume that Conditions {\rm \ref{A1}} to {\rm \ref{A6}} are satisfied. Then 
$$x \mapsto  \dfrac{ f(\phi(x) \pm \varsigma \phi'(x) \vartheta(\eta^\sharp x))}{ f(\phi(x))}$$ is bounded on $(0,\epsilon)$. 
\end{Lemma}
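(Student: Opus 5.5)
We use the fundamental theorem of calculus in the $y$-variable, relative to the point $\phi(x)$, and then the bound on $f'$ provided by \ref{A6}. For $x\in(0,\epsilon)$ set $y_x=\phi(x)$ and $\delta_x=\varsigma\,\phi'(x)\,\vartheta(\eta^\sharp x)$, so that $\delta_x\le 0$ and $y_x\pm\delta_x>0$ by Lemma~\ref{lem:lastcondwelldef}. Then
\begin{equation*}
\frac{f(y_x\pm\delta_x)}{f(y_x)} = 1 \pm \frac{\delta_x}{f(y_x)}\int_0^1 f'(y_x\pm s\delta_x)\,ds .
\end{equation*}
Since $f(y_x)=\phi'(x)$, the prefactor is $\delta_x/f(y_x)=\varsigma\,\vartheta(\eta^\sharp x)$. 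It therefore suffices to bound $\vartheta(\eta^\sharp x)\,\sup_{s\in[0,1]}|f'(y_x\pm s\delta_x)|$ on $(0,\epsilon)$.

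To do this, split $f'(y_x\pm s\delta_x)=f'(y_x)+\big[f'(y_x\pm s\delta_x)-f'(y_x)\big]$. Because $f'$ is non-increasing on $[0,\infty)$ (concavity, \ref{A3}), the point $y_x\pm s\delta_x$ lies between $y_x$ and $y_x\pm\delta_x$. Hence the bracket is bounded in absolute value by $|f'(y_x\pm\delta_x)-f'(y_x)|$, and \ref{A6} bounds this by $\Psi(x)$. Assumption \ref{A6} also states that $\vartheta(\eta^\sharp\cdot)\Psi$ is bounded on $(0,\epsilon)$, so this contribution is controlled.

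The remaining term is $\vartheta(\eta^\sharp x)\,|f'(\phi(x))|$. We have $f'(\phi(x))=\phi''(x)/\phi'(x)=-\kappa^1(x)/x$, and $\kappa^1$ is bounded on $(0,\epsilon)$ by \ref{A5} and Lemma~\ref{lem:ineg_kappa_i_x}. It thus suffices that $\vartheta(\eta^\sharp x)/x$ stays bounded. By \eqref{eq:estim_vartheta},
\begin{equation*}
\frac{\vartheta(\eta^\sharp x)}{x}\le \eta^\sharp\max\Big(\frac{1}{-\phi'(\eta^\sharp x)},\,\eta^\sharp x\Big).
\end{equation*}
This is bounded near $0$ because $-\phi'(z)=-f(\phi(z))\to\infty$ as $z\to0$: $\phi(z)\to\infty$, and $-f$ is non-decreasing, concave and not identically zero, so $-f(y)\ge c\,y$ for large $y$. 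On the rest of $(0,\epsilon)$, continuity of $\phi'$ and the fact that $\phi'<0$ (Lemma~\ref{lem:ineq_phi}) give boundedness.

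The main obstacle is this last point: checking that $\vartheta(\eta^\sharp x)\kappa^1(x)/x$ is bounded uniformly on all of $(0,\epsilon)$, not just near $0$. This requires handling both branches of $\vartheta=\max(\varpi,x^2)$ and using only the monotonicity of $-\phi'$. If needed, one can shrink $\epsilon$, as the paper already allows. With both terms bounded, the ratio is bounded for both choices of sign.
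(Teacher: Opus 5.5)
Your proof is correct and follows essentially the same route as the paper's: you apply the fundamental theorem of calculus around $\phi(x)$, identify the prefactor $\varsigma\vartheta(\eta^\sharp x)$, split $f'$ into $f'(\phi(x))=-\kappa^1(x)/x$ plus a difference bounded by $\Psi(x)$ via \ref{A6} and monotonicity of $f'$, and use that $\vartheta(\eta^\sharp x)/x$ is bounded (the paper also notes that for the $+$ sign the difference term has a favourable sign, so \ref{A6} is only needed for the $-$ sign). One slip: $-f$ is convex on $[0,\infty)$, not concave, and in any case you do not need $-\phi'\to\infty$, because $-\phi'$ is positive and non-increasing, so $1/(-\phi'(\eta^\sharp x))\le 1/(-\phi'(\eta^\sharp\epsilon))$ on $(0,\epsilon)$, which settles the step you flagged as the main obstacle.
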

\begin{proof}
For any $x > 0$, since $f$ is non-increasing and negative on $(0,\infty)$, it holds 
$$ \dfrac{ f(\phi(x) - \varsigma \phi'(x) \vartheta(\eta^\sharp x))}{ f(\phi(x))}  \geq 1.$$ 
Moreover, for all $x \in(0,\epsilon)$ we have 
\begin{align*}
 & \dfrac{ f(\phi(x) - \varsigma \phi'(x) \vartheta(\eta^\sharp x))}{ f(\phi(x))} - 1 
  =- \dfrac{ \varsigma \phi'(x) \vartheta(\eta^\sharp x)}{ f(\phi(x))} \int_0^1 f'( \phi(x) - a \varsigma \phi'(x) \vartheta(\eta^\sharp x)) d a \\
& \quad =-  \varsigma \vartheta(\eta^\sharp x) \int_0^1 f'( \phi(x)- a \varsigma \phi'(x) \vartheta(\eta^\sharp x)) d a \\
& \quad = -  \varsigma \vartheta(\eta^\sharp x) \int_0^1\left[  f'( \phi(x)- a \varsigma \phi'(x) \vartheta(\eta^\sharp x)) -f'( \phi(x))\right]  d a - \varsigma  f'( \phi(x))\vartheta(\eta^\sharp x)\\
&  \quad = -  \varsigma \vartheta(\eta^\sharp x) \int_0^1 \left[ f'( \phi(x)- a \varsigma \phi'(x) \vartheta(\eta^\sharp x)) -f'( \phi(x))\right]  d a + \varsigma  \kappa^1(x) \dfrac{\vartheta(\eta^\sharp x)}{x}\\
& \quad \leq \varsigma \vartheta(\eta^\sharp x)\Psi(x) + \varsigma  \kappa^1(x) \dfrac{\vartheta(\eta^\sharp x)}{x}.
\end{align*}
Similarly, it holds for all $x \in(0,\epsilon)$ that 
 \begin{align*}
 &0 \geq  \dfrac{ f(\phi(x) + \varsigma \phi'(x) \vartheta(\eta^\sharp x))}{ f(\phi(x))} - 1 
  = \dfrac{ \varsigma \phi'(x) \vartheta(\eta^\sharp x)}{ f(\phi(x))} \int_0^1 f'( \phi(x) + a \varsigma \phi'(x) \vartheta(\eta^\sharp x)) d a \\
&  \quad =   \varsigma \vartheta(\eta^\sharp x) \int_0^1 \left[ f'( \phi(x) + a \varsigma \phi'(x) \vartheta(\eta^\sharp x)) -f'( \phi(x))\right]  d a - \varsigma  \kappa^1(x) \dfrac{\vartheta(\eta^\sharp x)}{x}\\
& \quad \geq 
- \varsigma  \kappa^1(x) \dfrac{\vartheta(\eta^\sharp x)}{x}.
\end{align*}
The function $x\mapsto \kappa^1(x)$ is bounded on $(0,\epsilon)$ due to Condition~\ref{A5} (cf.~\eqref{ineq:kappa_i}), and $x \mapsto\dfrac{\vartheta(\eta^\sharp x)}{x}$ is also bounded on $(0,\epsilon)$. The conclusion of the lemma now directly follows from Assumption~\ref{A6}.
\end{proof}

\paragraph{Acknowledgments}

All authors acknowledge funding by the Deutsche Akademische Austauschdienst (DAAD, German Academic Exchange Service) within the funding program ``Programme for Pro\-ject-Related Personnel Exchange with France'' -- Project number 57702378, and by Campus France PHC (Projet Hubert Curien) Procope number 50835ZC. 
Julia Ackermann and Thomas Kruse acknowledge funding by the Deutsche For\-schungsgemeinschaft (DFG, German Research Foundation) -- Project-ID 531152215 -- CRC 1701.


\bibliography{biblio_num}

\end{document}